\newtheorem{nnassumption}{\bf Assumption}
\newtheorem{nntheorem}{\bf Theorem}
\newenvironment{theorem}{\begin{nntheorem}\it}{\end{nntheorem}}
\newtheorem{nncorollary}{\bf Corollary}
\newenvironment{corollary}{\begin{nncorollary}\it}{\end{nncorollary}}
\newtheorem{nndefinition}{\bf Definition}
\newtheorem{nnproposition}{\bf Proposition}
\newtheorem{nnproblem}{\bf Problem}
\newtheorem{nnlemma}{\bf Lemma}
\newenvironment{lemma}{\begin{nnlemma}\it}{\end{nnlemma}}
\newtheorem{nnremark}{\bf Remark}
\newenvironment{remark}{\begin{nnremark} \rm }{\hfill \hspace*{1pt}\hfill $\circ$\end{nnremark}}
\newtheorem{nnexample}{\bf Example}
\newenvironment{proof}{{\bf Proof.}}{\hfill \hspace*{1pt}\hfill $\Box$}
\def\R{\mathrm{I\kern-0.21emR}}
\def\N{\mathrm{I\kern-0.21emN}}
\renewcommand{\geq}{\geqslant}
\renewcommand{\leq}{\leqslant}
\begin{document}
\title{Feedback stabilization of a 1D linear reaction-diffusion equation with delay boundary control}

\author{Christophe Prieur
and
Emmanuel Tr\'elat
\thanks{C. Prieur is with Univ. Grenoble Alpes, CNRS, Gipsa-lab, F-38000 Grenoble, France, 
\texttt{christophe.prieur@gipsa-lab.fr}}
\thanks{E. Tr\'elat is with Sorbonne Universit\'es, UPMC Univ Paris 06, CNRS UMR 7598, Laboratoire Jacques-Louis Lions, F-75005, Paris, France, \texttt{emmanuel.trelat@upmc.fr}}
}

\maketitle

\begin{abstract}
The goal of this work is to compute a boundary control of reaction-diffusion partial differential equation. The boundary control is subject to a constant delay, whereas the equation may be unstable without any control. For this system equivalent to a parabolic equation coupled with a transport equation, a prediction-based control is explicitly computed. To do that we decompose the infinite-dimensional system into two parts: one finite-dimensional unstable part, and one stable infinite-dimensional part. An finite-dimensional delay controller is computed for the unstable part, and it is shown that this controller succeeds in stabilizing the whole partial differential equation. The proof is based on a an explicit form of the classical Artstein transformation, and an appropriate Lyapunov function.  A numerical simulation illustrate the constructive design method.
\end{abstract}

\begin{IEEEkeywords}
Reaction-diffusion equation, delay control, Lyapunov function, partial differential equation.
\end{IEEEkeywords}

\IEEEpeerreviewmaketitle

\section{Introduction and main result}\label{sec_intro}

\subsection{Literature review and statement of the main result}

There have been a number of works in the literature dealing with the stabilization of processes with input delays, mainly in finite dimension, but seemingly much less for processes driven by PDEs.

In \cite{FridmanOrlov2009} a stable PDE is controlled by means of a delayed bounded linear control operator (see also \cite{solomon2015stability} for a semilinear case). 
In the present work, the control operator is unbounded (Dirichlet boundary control) and the open-loop system is unstable.

Unbounded control operators have been considered in \cite{nicaise:dcds:2009,Nicaise2007425,nicaise2008stabilization} for both wave and heat equations, where time-varying delays are allowed with a bound on the time-derivative of the delay function. See also \cite{fridman:SIAM:2010} for a second-order evolution equation. In the present paper, a Lyapunov technique is developed in which, in addition to an exponential stability analysis, we also design a stabilizing controller which is of a finite-dimensional nature, based on a finite-dimensional spectral truncation of \eqref{eqcont0} containing all unstable modes.

To the best of our knowledge, the first work dealing with input delayed unstable PDEs is \cite{Krstic:SCL:2009} where a reaction-diffusion equation is considered, and a backstepping approach is developed to stabilize it (see also \cite{bresch2014output} for a similar approach for a wave equation). In this paper, we do not use backstepping and we exploit a decomposition of the state space into a stable part and a finite-dimensional unstable part.

Let us write more precisely the problem under study and state the main result of this work. Let $L>0$, let $c\in L^\infty(0,L)$ and let $D\geq 0$ be arbitrary.
We consider the one-dimensional reaction-diffusion equation on $(0,L)$ with a delayed Dirichlet boundary control
\begin{equation} \label{eqcont0}
\begin{split}
&y_t = y_{xx}+c(x)y, \ t\geq 0, \; x \in (0,L), \\
&y(t,0)=0,\quad y(t,L)=u_D(t)=u(t-D),\ t\geq 0,
\end{split}
\end{equation}
where the state is $y(t,\cdot):[0,L]\rightarrow\R$ and the control is $u_D(t) = u(t-D)\in\R$, with $D> 0$ a constant delay.
Our objective is to design an exponentially stabilizing feedback control for \eqref{eqcont0}.

By using a classical change of variables (see e.g. \cite{krstic2008compensating}) this problem is equivalent to the problem of stabilizing the coupled system
\begin{equation*}
\begin{split}
&y_t = y_{xx}+c(x)y,   \ t\geq 0, \; x \in (0,L), \\
&y(t,0)=0,\quad y(t,L)=z(t,0),\ t\geq 0,\\
&z_t = z_w, \ t\geq 0, \ t\geq 0,\; w \in (0,D), \\
&z(t,D) = U(t), \ t\geq 0, 
\end{split}
\end{equation*}
with $z(t,w)=U(t+w-D)$, where the first equation is \eqref{eqcont0} and the second equation is a transport equation causing the delay $D$ in the control of \eqref{eqcont0}. In other words our control objective can be seen as a boundary control problem of a coupled system obtained by writing in cascade a reaction-diffusion equation and a transport equation.

We assume that we are only interested in what happens for $t\geq 0$, and we consider an initial condition
$$
y(0,\cdot)=y_0(\cdot) \in L^2(0,L),
$$
and since the boundary control is retarded with the delay $D$, we assume that no control is applied (i.e., $u=0$) within the time interval $(0,D)$. 
For every $t>D$ on, a nontrivial control 
can then be applied. 

In this paper, we establish the following result.

\begin{theorem}\label{thm1}
The delayed Dirichlet boundary control reaction-diffusion equation \eqref{eqcont0} is exponentially stabilizable, with a feedback control that is built from a finite-dimensional autonomous linear control system with input delay. When closing the loop with this feedback, the PDE \eqref{eqcont0} is exponentially stable, that is there exist $\mu>0$ and $C>0$ such that, for all $y_0(\cdot) \in L^2(0,L)$, the solution of \eqref{eqcont0} is such that $t\mapsto \|y(t,\cdot)\|_{H^1(0,L)}$ converges exponentially to $0$ as $t\rightarrow+\infty$.
\end{theorem}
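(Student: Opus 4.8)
The plan is to use a spectral decomposition of the reaction-diffusion operator $A = \partial_{xx} + c(x)$ with Dirichlet boundary conditions on $(0,L)$. Since $A$ is self-adjoint with compact resolvent, it has a real discrete spectrum $\lambda_1 > \lambda_2 > \cdots \to -\infty$ with an orthonormal basis of eigenfunctions $(e_n)$ in $L^2(0,L)$. Only finitely many eigenvalues, say $\lambda_1,\dots,\lambda_N$, satisfy $\lambda_n \geq -\mu$ for a chosen decay rate $\mu > 0$ (chosen so that $\lambda_{N+1} < -\mu < \lambda_N$). I would lift the nonhomogeneous Dirichlet condition $y(t,L) = z(t,0)$ by writing $y = \tilde y + \frac{x}{L} z(t,0)$, or more conveniently work directly with the variation-of-constants formula for the boundary-controlled heat semigroup, so that the coefficients $y_n(t) = \langle y(t,\cdot), e_n\rangle$ satisfy scalar ODEs $\dot y_n = \lambda_n y_n + b_n\, z(t,0) = \lambda_n y_n + b_n\, u(t-D)$, where $b_n$ comes from the boundary term (essentially $b_n = e_n'(L)$ up to normalization). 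This splits the state into a finite-dimensional unstable part $X = (y_1,\dots,y_N)^\top$ obeying $\dot X = A_N X + B_N u(t-D)$ and an infinite-dimensional tail $(y_n)_{n>N}$ that is exponentially stable with rate $\mu$ when driven by the bounded-in-appropriate-norm input $u(t-D)$.

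**Next I would** design the controller on the finite-dimensional delay system $\dot X = A_N X + B_N u(t-D)$. First I must check that $(A_N, B_N)$ is controllable: this follows because the $\lambda_n$ are simple and $b_n = e_n'(L) \neq 0$ for each eigenfunction of a Sturm-Liouville problem (a standard unique-continuation fact — an eigenfunction vanishing to second order at an endpoint would be identically zero). Then I apply the Artstein reduction: set $Z(t) = X(t) + \int_{t-D}^{t} e^{A_N(t-D-s)} B_N u(s)\,ds$, which satisfies the delay-free system $\dot Z = A_N Z + e^{-A_N D} B_N u(t)$. Choosing $u(t) = K Z(t)$ with $K$ a gain placing the spectrum of $A_N + e^{-A_N D} B_N K$ strictly to the left of $-\mu$ (possible by pole-shifting since the reduced pair is controllable) yields exponential decay of $Z$, hence of $X$ and of $u$ itself, all at rate $\mu$. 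This is the "explicit Artstein transformation" and "finite-dimensional autonomous linear control system with input delay" referred to in the statement.

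**The hard part will be** closing the loop on the full infinite-dimensional system and proving exponential stability in $H^1(0,L)$ rather than merely $L^2$. The feedback $u(t) = K Z(t)$ depends, through $X(t)$ and the integral term, on the full PDE state and on the history of $u$ on $[t-D,t]$, so the closed loop is a coupled PDE-transport-ODE system; I would construct a Lyapunov functional of the form $V(t) = \|y_{>N}(t,\cdot)\|_{H^1}^2 + \alpha |Z(t)|^2 + \beta \int_{t-D}^t e^{2\mu(s-t)} |u(s)|^2\,ds$ (the last term handling the transport/delay component), and show $\dot V \leq -2\mu V$ for suitable weights $\alpha,\beta$, using the spectral gap $\lambda_{N+1} < -\mu$ to dominate the cross terms coupling the tail to the boundary input. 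A technical point is the $H^1$ regularity: the boundary input $z(t,0) = u(t-D) = KZ(t-D)$ is as smooth in $t$ as $Z$, which is $C^1$, and parabolic smoothing gives instantaneous $H^1$ (indeed $C^\infty$) regularity of $y(t,\cdot)$ for $t>0$, so after an initial transient the $H^1$ norm is controlled; one then checks the tail series $\sum_{n>N} \lambda_n y_n(t)^2$ converges and decays, which requires care with the growth of $b_n$ (of order $n$) against the decay $e^{2\lambda_n t}$ and a quantitative use of the delay-averaged input bound. Establishing this $\dot V \leq -2\mu V$ inequality, reconciling the boundary regularity with the Lyapunov estimate, is where the real work lies.
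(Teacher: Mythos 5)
Your overall architecture --- spectral splitting into a finite-dimensional unstable part and a stable tail, controllability via $e_n'(L)\neq 0$ and simplicity of the eigenvalues, the Artstein reduction with pole placement, and a Lyapunov functional combining the finite-dimensional part, a delay integral, and the $H^1$ norm of the tail --- is the same as the paper's, and your finite-dimensional stabilization step matches Lemma \ref{lem:1} and Corollary \ref{corkalman} almost verbatim.

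The genuine gap is in how the Dirichlet boundary control enters the infinite-dimensional tail, which is precisely the step you defer as ``where the real work lies.'' You elect to work with the modal equations $\dot y_n=\lambda_n y_n+b_n\,u(t-D)$ with $b_n=-e_n'(L)$ of order $n$, i.e., with the unbounded control operator acting directly on every mode. Differentiating your proposed term $\|y_{>N}\|_{H^1}^2=\sum_{n>N}(-\lambda_n)y_n^2$ then produces the cross term $u(t-D)\sum_{n>N}(-\lambda_n)b_n y_n$, and since $\sum_n b_n^2\sim\sum_n n^2=\infty$ this cannot be absorbed by Young or Cauchy--Schwarz against the available dissipation (of order $\|Aw\|_{L^2}^2$); any splitting of the weights forces control of a norm strictly stronger than $H^2$, which the parabolic dissipation does not supply. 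So the inequality $\dot V\leq-2\mu V$ does not close in your formulation. The paper avoids this by the lifting $w=y-\frac{x}{L}u_D$, which turns the boundary control into the \emph{bounded} distributed inputs $a(\cdot)u_D+b(\cdot)u_D'$ with $a,b\in L^2(0,L)$, so that $\sum_j a_j^2$ and $\sum_j b_j^2$ are finite and the cross terms \eqref{est1}--\eqref{est2} close; the price is the appearance of $u_D'$, which is handled by declaring $\alpha=u_D'$ to be the new control and augmenting the finite-dimensional state to $X_1=(u_D,w_1,\dots,w_n)^\top\in\R^{n+1}$. You mention the lifting as an option but set it aside, and your finite-dimensional system contains neither $u_D$ as a state nor $u_D'$ as an input, so neither branch of your sketch reaches a closable estimate. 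To repair the argument, adopt the lifting together with the state augmentation and redo the Kalman computation for the augmented $(n+1)$-dimensional pair as in Lemma \ref{lem:1}, where the relevant nonvanishing quantity is again $a_j+\lambda_j b_j=-e_j'(L)$.
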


Note that, in the previous result, we do not make any smallness assumption on the delay $D$: for any value $D\geq 0$ of the delay, there exists a stabilizing feedback.

\subsection{Presentation of the design method and organization of the paper}

The delayed controller considered in TheoremÊ\ref{thm1} is built and our approach yields a constructive design method. More precisely, our strategy, developed in Section \ref{sec2}, begins with a spectral analysis of the operator underlying the control system \eqref{eqcont0} (compact perturbation of a Dirichlet-Laplacian), thanks to which we split the system into two parts. The first part of the system is finite dimensional and contains (at least) all unstable modes, whereas the second part is infinite dimensional and contains only stable modes. The stabilizing feedback is designed on the finite-dimensional part of the system: we use the Artstein model reduction and we design a Kalman gain matrix in a standard way with the pole-shifting theorem; then, following \cite{breschprieurtrelat} we invert the Artstein transform and we obtain the desired feedback. This feedback control is such that its value $u(t-D)$ at time $t-D$ only depends on the values of $X_1(s)$ with $0<s<t-D$, where $X_1$ is identified with the unstable finite-dimensional part of the state.

By definition, this feedback stabilizes exponentially the finite-dimensional part of the system. Using an appropriate Lyapunov function, we then prove that it stabilizes as well the whole system. This is the core of the proof of Theorem \ref{thm1}.

The idea of designing a feedback on the unstable part of the system can be found in \cite{russell1978controllability} and has been used for instance in \cite{coron-trelat-heat,coron-trelat-wave} (for undelayed PDEs) where the efficiency of such a procedure has also been shown. Here, due to the presence of a delay, in practice one has to stabilize a finite-dimensional autonomous linear control system with input delay. In the existing literature, this classical issue has been investigated for instance in \cite{manitius1979finite,Bresch2014:IEEETAC} by a predictor approach. The recent paper \cite{breschprieurtrelat} surveys on the numerical and practical aspects of this problem and shows that the designed controller can be computed numerically in particular thanks to a fixed point procedure. Here, we exploit this procedure to design a stabilizing controller for the unstable heat equation, by revisiting the delay input for the unstable finite-dimensional part of the state space, and by adapting it to the full boundary delay control.

Overall, our stabilization procedure is carried out with a simple approach that is easy to implement. Some details and a numerical illustration is provided in Section \ref{secnum}.

The remaining part of the paper is organized as follows. Section \ref{sec2} is devoted to the proof of the main result and to the design of the delayed boundary controller. To do that we first decouple the reaction-diffusion equation into two coupled parts: one unstable finite dimensional part and one infinite-dimensional part, using a spectral decomposition. It allows us to explicitly compute a finite dimensional delay controller in Section \ref{finite:control:sec}. When closing the loop with this delay input, we prove that the PDE (\ref{eqcont0}) is exponentially stable by using an appropriate Lyapunov function. A numerical simulation is given in Section \ref{secnum}, highlighting the applicability of this design method. Section \ref{sec_proof_lemtechnique} contains the proof of an intermediate result. Finally Section \ref{sec:con} collects concluding remarks and points out possible research lines.

\section{Construction of the feedback and proof of Theorem \ref{thm1}}\label{sec2}
\subsection{Spectral reduction}
First of all, in order to deal rather with a homogeneous Dirichlet problem (which is more convenient), we set
\begin{equation}\label{defw}
w(t,x)=y(t,x)-\frac{x}{L}u_D(t),
\end{equation}
and we suppose that the control $u_D$ is differentiable for all positive times (this will be true in the construction that we will carry out). This leads to
\begin{equation}\label{reducedproblem2}
\begin{split}
& w_t=w_{xx}+cw+\frac{x}{L}cu_D-\frac{x}{L}u_D' ,  \quad \forall t>0, \;\forall x \in (0,1) , \\
& w(t,0)=w(t,L)=0, \quad \forall t>0, \\
& w(0,x)=y(0,x)-\frac{x}{L}u_D(0) , \quad \forall x \in (0,1).
\end{split}
\end{equation}
We define the operator
\begin{equation}
A=\partial_{xx}+c(\cdot)\mathrm{id},\ 
\end{equation}
on the domain $D(A)=H^2(0,L)\cap H^1_0(0,L)$. 
Then the above control system is
\begin{equation}\label{newzero}
w_t(t,\cdot)=Aw(t,\cdot)+a(\cdot)u_D(t)+b(\cdot)u_D'(t),
\end{equation}
with $a(x)=\frac{x}{L}c(x)$ and $b(x)=-\frac{x}{L}$ for every $x\in(0,L)$.

Noting that $A$ is selfadjoint and of compact inverse, we consider a Hilbert basis $(e_j)_{j\geq 1}$ of $L^2(0,L)$ consisting of eigenfunctions of $A$, associated with the sequence of eigenvalues $(\lambda_j)_{j\geq 1}$. Note that
$$-\infty<\cdots<\lambda_j<\cdots<\lambda_1\quad \textrm{and}\quad \lambda_j\underset{j\rightarrow +\infty}{\longrightarrow}-\infty ,$$
and that $e_j(\cdot)\in H^1_0(0,L)\cap C^2([0,L])$ for every $j\geq 1$.
Every solution $w(t,\cdot)\in H^2(0,L)\cap H^1_0(0,L)$ of \eqref{newzero} can be expanded as a series in the eigenfunctions $e_j(\cdot)$, convergent in $H_0^1(0,L)$,
$$w(t,\cdot)=\sum_{j=1}^{\infty}w_j(t)e_j(\cdot),$$
and therefore \eqref{eqcont0} is equivalent to the infinite-dimensional control system
\begin{equation}\label{eq16}
w_j'(t)=\lambda_jw_j(t)+a_ju_D(t)+b_ju_D'(t),\qquad\forall j\in\N^*,
\end{equation}
with
\begin{equation}
\begin{split}
a_j&=\left\langle a(\cdot),e_j(\cdot)\right\rangle_{L^2(0,L)} = \frac{1}{L}\int_0^L xc(x)e_j(x)\, dx, \\
b_j&=\langle b(\cdot),e_j(\cdot)\rangle_{L^2(0,L)} = -\frac{1}{L}\int_0^L xe_j(x)\, dx,
\end{split}
\end{equation}
for every $j\in\N^*$.
We define
\begin{equation}\label{eqalphaD}
\alpha_D(t)=u_D'(t),
\end{equation}
and we consider from now on $u_D(t)$ as a state and $\alpha_D(t)$ as a control (destinated to be a delayed feedback, with constant delay $D$), so that equations \eqref{eq16} and \eqref{eqalphaD} form an infinite-dimensional control system controlled by $\alpha_D$, written as
\begin{equation}\label{sys-dim-infinie}
\begin{split}
u_D'(t) &= \alpha_D(t), \\
w_1'(t) &=\lambda_1w_1(t)+a_1u_D(t)+b_j\alpha_D(t), \\
& \vdots \\
w_j'(t) &=\lambda_jw_j(t)+a_ju_D(t)+b_j\alpha_D(t), \\
& \vdots 
\end{split}
\end{equation}
and which is equivalent to \eqref{eqcont0}.

Let $n\in\N^*$ be the number of nonnegative eigenvalues and let $\eta>0$ be such that
\begin{equation}\label{refeta}
\forall k>n\quad \lambda_k<-\eta<0.
\end{equation}
Let $\pi_1$ be the orthogonal projection onto the subspace of $L^2(0,L)$ spanned by
$e_1(\cdot),\ldots,e_n(\cdot)$, and let
\begin{equation}\label{newun}
w^1(t)=\pi_1w(t,\cdot)=\sum_{j=1}^nw_j(t)e_j(\cdot).
\end{equation}
With the matrix notations
\begin{eqnarray}
\nonumber
&X_1(t)=\begin{pmatrix} u_D(t) \\ w_1(t) \\ \vdots \\ w_n(t)
\end{pmatrix} ,& \!\!\!\!
A_1=\begin{pmatrix}
0         &       0         & \cdots &    0           \\
a_1 & \lambda_1 & \cdots &    0           \\
\vdots    &  \vdots         & \ddots &   \vdots       \\
a_n &  0              & \cdots & \lambda_n
\end{pmatrix} , \\
\label{eq:def:A1}
&B_1=\begin{pmatrix} 1 & b_1 & \ldots &
b_n \end{pmatrix}^\top , &
\end{eqnarray}
the $n$ first equations of 
\eqref{sys-dim-infinie}
form the finite-dimensional control system with input delay
\begin{equation}\label{systfini}
X'_1(t)=A_1X_1(t) + B_1\alpha_D(t) = A_1X_1(t) + B_1\alpha(t-D).
\end{equation}
Note that the state $X_1(t)\in\R^{n+1}$ involves the term $u_D(t)$ which contains the delay. 

Our objective is to design a feedback control $\alpha$ exponentially stabilizing the infinite-dimensional system \eqref{sys-dim-infinie}. 
As shortly explained in the previous section, the idea consists of first designing a feedback control exponentially stabilizing the finite-dimensional system \eqref{systfini}, and then of proving that this feedback actually stabilizes the whole system \eqref{sys-dim-infinie}. The idea underneath is that the finite-dimensional system \eqref{systfini} contains all unstable modes of the complete system \eqref{sys-dim-infinie}, and thus has to be stabilized. It is however not obvious that this feedback stabilizing the unstable finite-dimensional part actually stabilizes as well the entire system \eqref{sys-dim-infinie}. This fact will be proved thanks to an appropriate Lyapunov functional.

\subsection{Stabilization of the unstable finite-dimensional part}\label{finite:control:sec}
Let us design a feedback control stabilizing the finite-dimensional linear autonomous control system with input delay \eqref{systfini} and let us also design a Lyapunov functional.
First of all, following the so-called Artstein model reduction (see \cite{artstein1982linear,richard:survey}), we set, for every $t\in\R$,
\begin{equation}\label{transfo_Artstein}
\begin{array}{rcl}Z_1(t)& =& X_1(t) + \int_{t-D}^t e^{(t-s-D)A_1}B_1\alpha(s)\, ds \\
&= &X_1(t) + \int_{0}^D e^{-\tau A_1}B_1\alpha(t-D+\tau)\, d\tau  ,
\end{array}\end{equation}
and we get that \eqref{systfini} is equivalent to
\begin{equation}\label{systfiniArtstein}
\dot{Z}_1(t) = A_1Z_1(t)+e^{-DA_1}B_1\alpha(t),
\end{equation}
which is a usual linear autonomous control system without input delay in $\R^{n+1}$.
The equivalence is because the Artstein transformation \eqref{transfo_Artstein} can be inverted (see further).
Now, for this classical finite-dimensional system, we have the following result.

\begin{lemma}\label{lem:1}
For every $D\geq 0$, the pair $(A_1,e^{-DA_1}B_1)$ satisfies the Kalman condition, that is,
\begin{equation}\label{kalman}
\mathrm{rank} \left( e^{-DA_1}B_1, A_1e^{-DA_1}B_1, \ldots, A_1^{n}e^{-DA_1}B_1  \right) = n+1 .
\end{equation}
\end{lemma}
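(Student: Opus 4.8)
My plan is to reduce the statement to the case $D=0$ and then apply the Hautus (PBH) test. The first observation is that $A_1$ commutes with $e^{-DA_1}$, so that $A_1^k e^{-DA_1}B_1 = e^{-DA_1}A_1^k B_1$ for every $k\in\N$; hence the controllability matrix appearing in \eqref{kalman} equals $e^{-DA_1}$ times the controllability matrix $\left(B_1,A_1B_1,\ldots,A_1^nB_1\right)$ of the pair $(A_1,B_1)$. Since $e^{-DA_1}$ is invertible, \eqref{kalman} holds for a given $D\geq 0$ if and only if it holds for $D=0$, i.e. if and only if the pair $(A_1,B_1)$ is controllable. It therefore suffices to treat the case $D=0$.

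To prove that $(A_1,B_1)$ is controllable, I would use the Hautus test: it is enough to show that there is no nonzero $v=(v_0,v_1,\ldots,v_n)^\top\in\C^{n+1}$ and no $\mu\in\C$ with $v^\top A_1=\mu v^\top$ and $v^\top B_1=0$. Writing these out with the explicit form \eqref{eq:def:A1} of $A_1$ and $B_1$ yields the scalar relations $\sum_{i=1}^n a_i v_i=\mu v_0$, then $\lambda_j v_j=\mu v_j$ for $j=1,\ldots,n$, and finally $v_0+\sum_{j=1}^n b_j v_j=0$. A short case distinction then finishes the argument. If $\mu\notin\{\lambda_1,\ldots,\lambda_n\}$, the relations $\lambda_j v_j=\mu v_j$ force $v_j=0$ for all $j\geq 1$, whence $v^\top B_1=v_0=0$ and $v=0$. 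If $\mu=\lambda_{j_0}$ for some index $j_0$ — necessarily unique since the $\lambda_j$ are pairwise distinct — then $v_j=0$ for $j\neq j_0$, the first relation reduces to $a_{j_0}v_{j_0}=\lambda_{j_0}v_0$, and $v^\top B_1=0$ reduces to $v_0=-b_{j_0}v_{j_0}$; substituting gives $(a_{j_0}+\lambda_{j_0}b_{j_0})\,v_{j_0}=0$.

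The crux of the proof is thus the claim that $a_j+\lambda_j b_j\neq 0$ for every $j$, and this is the only genuinely PDE-flavored step. Here I would use the eigenfunction equation $e_j''+c\,e_j=\lambda_j e_j$, which gives $(c-\lambda_j)e_j=-e_j''$, so that
\[
a_j+\lambda_j b_j=\frac1L\int_0^L x\big(c(x)-\lambda_j\big)e_j(x)\,dx=-\frac1L\int_0^L x\,e_j''(x)\,dx=-e_j'(L),
\]
after one integration by parts using $e_j(0)=e_j(L)=0$. Since $e_j$ is a nontrivial solution of the second-order linear ODE $e_j''=(\lambda_j-c)e_j$ satisfying $e_j(L)=0$, it cannot also satisfy $e_j'(L)=0$ (uniqueness for the Cauchy problem at $x=L$); hence $e_j'(L)\neq 0$ and $a_j+\lambda_j b_j\neq 0$. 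Going back to the case distinction, this forces $v_{j_0}=0$, then $v_0=0$, so $v=0$. Therefore $(A_1,B_1)$ is controllable, and by the reduction of the first paragraph the Kalman condition \eqref{kalman} holds for every $D\geq 0$, which proves the lemma. I expect the integration-by-parts identity $a_j+\lambda_j b_j=-e_j'(L)$ together with its nonvanishing to be the only non-routine point; the rest is bookkeeping with the triangular structure of $A_1$.
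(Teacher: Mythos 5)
Your proof is correct. The reduction to $D=0$ via commutation and invertibility of $e^{-DA_1}$, the identity $a_j+\lambda_j b_j=-e_j'(L)$ obtained by integrating by parts against the eigenfunction equation, and the Cauchy-uniqueness argument at $x=L$ showing $e_j'(L)\neq 0$ are exactly the steps of the paper's proof. The only point where you diverge is in how controllability of $(A_1,B_1)$ is verified: you run the Hautus test on left eigenvectors of the lower-triangular matrix $A_1$, whereas the paper computes the Kalman determinant in closed form as $\prod_{j=1}^{n}(a_j+\lambda_jb_j)\,\mathrm{VdM}(\lambda_1,\ldots,\lambda_n)$ (see \eqref{deter1}). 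Both arguments hinge on the same two facts --- the $\lambda_j$ are pairwise distinct (simplicity of Sturm--Liouville eigenvalues) and $a_j+\lambda_j b_j\neq 0$ --- so the difference is essentially cosmetic; if anything, your Hautus route avoids relying on the determinant identity, which the paper asserts without proof, at the cost of a short case distinction.
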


\begin{proof}
Since $A_1$ and $e^{-DA_1}$ commute, and since $e^{-DA_1}$ is invertible, we have
\begin{equation*}
\begin{split}
& \mathrm{rank} \left( e^{-DA_1}B_1, A_1e^{-DA_1}B_1, \ldots, A_1^{n}e^{-DA_1}B_1  \right) \\
=&\ \mathrm{rank} \left( e^{-DA_1}B_1, e^{-DA_1}A_1B_1, \ldots, e^{-DA_1}A_1^{n}B_1  \right) \\
=&\ \mathrm{rank} \left( B_1, A_1B_1, \ldots, A_1^{n}B_1  \right),
\end{split}
\end{equation*}
and hence it suffices to prove that the pair $(A_1,B_1)$ satisfies the Kalman condition.
A simple computation leads to
\begin{equation}\label{deter1}
\textrm{det} \left( B_1, A_1B_1, \ldots, A_1^{n}B_1 \right)
= \prod_{j=1}^{n}(a_j+\lambda_jb_j)\,\mathrm{VdM}(\lambda_1,\ldots,\lambda_n) ,
\end{equation}
where $\mathrm{VdM}(\lambda_1,\ldots,\lambda_n)$ is a Van der Monde
determinant, and thus is never equal to zero since the real numbers $\lambda_j$, $j=1\ldots n$, are all distinct.
On the other part, using the fact that every $e_j(\cdot)$ is an eigenfunction of $A$ and belongs to $H^1_0(0,L)$, we have, for every integer $j$,
\begin{equation*}
\begin{array}{rcl}
a_j+\lambda_jb_j&
=& \frac{1}{L} \int_0^L x\left( c(x)e_j(x) -\lambda_je_j(x) \right) dx  
\\
&=& -\frac{1}{L} \int_0^L x e_j''(x) \, dx  
= - e_j'(L), \end{array}
\end{equation*}
which is not equal to zero since $e_j(L)=0$ and $e_j(\cdot)$ is a nontrivial solution of a linear second-order scalar differential equation. The lemma is proved.
\end{proof}

Since the linear control system \eqref{systfiniArtstein} satisfies the Kalman condition, the well-known pole-shifting theorem imply the existence of a stabilizing gain matrix and of a Lyapunov functional (see, e.g., \cite{Khalil:2002,Sontag:book,Trelat2005}). This yields the following corollary.

\begin{corollary}\label{corkalman}
For every $D\geq 0$, there exists a $1\times (n+1)$ matrix $K_1(D)=\begin{pmatrix} k_0(D),k_1(D),\ldots,k_n(D) \end{pmatrix} $ such that $A_1+B_1e^{-DA_1}K_1(D)$ admits $-1$ as an eigenvalue with order $n+1$.
Moreover there exists a $(n+1)\times (n+1)$ symmetric positive definite matrix $P(D)$ such that
\begin{equation}\label{poleshifting}\begin{array}{c}
P(D)\left(A_1+B_1e^{-DA_1}K_1(D)\right)\\+ \left(A_1+e^{-DA_1}B_1K_1(D)\right)^\top P(D) = -I_{n+1} .
\end{array}\end{equation}
In particular, the function
\begin{equation}\label{defV1}
V_1(Z_1) = \frac{1}{2} Z_1^\top P(D)Z_1
\end{equation}
is a Lyapunov function for the closed-loop system 
$$
\dot{Z}_1(t) = (A_1+e^{-DA_1}B_1K_1(D))Z_1(t) .
$$
\end{corollary}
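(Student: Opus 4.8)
The plan is to obtain this corollary as a routine combination of two classical finite-dimensional facts: the pole-shifting (pole placement) theorem and the Lyapunov theorem for Hurwitz matrices. By Lemma \ref{lem:1}, the single-input pair $(A_1,e^{-DA_1}B_1)$ satisfies the Kalman controllability condition in $\R^{n+1}$, so the pole-shifting theorem in its single-input form (e.g.\ via Ackermann's formula) provides, for the prescribed monic polynomial $\chi(X)=(X+1)^{n+1}$, a row vector $K_1(D)=(k_0(D),\ldots,k_n(D))$ such that the characteristic polynomial of $M(D):=A_1+e^{-DA_1}B_1K_1(D)$ equals $\chi$. In particular $-1$ is the only eigenvalue of $M(D)$, with algebraic multiplicity $n+1$ (read off directly from the characteristic polynomial), which is the first assertion; note that only the existence of such a $K_1(D)$, not its uniqueness, is needed.

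Next I would observe that $M(D)$ is Hurwitz, since its spectrum is $\{-1\}$. The Lyapunov theorem then yields, for the choice $Q=I_{n+1}$, a symmetric positive definite matrix $P(D)$ solving the Lyapunov equation \eqref{poleshifting}; explicitly one can take
\begin{equation*}
P(D)=\int_0^{+\infty} e^{tM(D)^\top}\,e^{tM(D)}\,dt,
\end{equation*}
the integral converging precisely because $M(D)$ is Hurwitz (with decay governed by any rate less than $1$). Symmetry and positive definiteness are immediate from this integral formula, and differentiating $e^{tM(D)^\top}e^{tM(D)}$ and integrating from $0$ to $+\infty$ gives $P(D)M(D)+M(D)^\top P(D)=-I_{n+1}$, which is \eqref{poleshifting}.

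Finally, since $P(D)$ is positive definite, $V_1(Z_1)=\tfrac12 Z_1^\top P(D)Z_1$ is a positive definite, radially unbounded quadratic form, and along any trajectory of $\dot Z_1=M(D)Z_1$ one computes
\begin{equation*}
\dot V_1(Z_1(t))=\tfrac12\,Z_1(t)^\top\bigl(M(D)^\top P(D)+P(D)M(D)\bigr)Z_1(t)=-\tfrac12\|Z_1(t)\|^2,
\end{equation*}
so $V_1$ is a strict Lyapunov function and the closed-loop system is (exponentially) stable. I do not anticipate any genuine obstacle in this corollary: it is essentially textbook, the only points deserving a line of justification being the single-input version of pole-shifting and the Hurwitz-to-Lyapunov step; note also that the statement does not assert any continuity of $D\mapsto(K_1(D),P(D))$, so no such regularity need be established here.
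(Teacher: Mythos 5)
Your proof is correct and follows essentially the same route as the paper, which simply invokes the pole-shifting theorem (applicable thanks to Lemma~\ref{lem:1}, since $A_1$ and $e^{-DA_1}$ commute) together with the classical Lyapunov theorem for Hurwitz matrices; you merely spell out the standard details (Ackermann's formula, the integral representation of $P(D)$). As a minor aside, your computation $\dot V_1=-\tfrac12\Vert Z_1\Vert^2$ is the correct one for $V_1=\tfrac12 Z_1^\top P(D)Z_1$ — the paper's Remark~\ref{rem_Lyap1} drops the factor $\tfrac12$, which is harmless for the subsequent estimates.
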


\begin{remark}
It is even possible to choose $K_1(D)$ and $P(D)$ as smooth (i.e., of class $C^\infty$) functions of $D$, but we do not need this property in this paper.
\end{remark}

\begin{remark} 
In the statement above, we chose $-1$ as an eigenvalue of $A_1+B_1e^{-DA_1}K_1(D)$, but actually the pole-shifting theorem implies that, for every $(n+1)$-tuple $(\mu_0,\ldots,\mu_n)$ of eigenvalues there exists a $1\times (n+1)$ matrix $K_1(D)$ such that the eigenvalues $A_1+B_1e^{-DA_1}K_1(D)$ are exactly $(\mu_0,\ldots,\mu_n)$. The eigenvalue $-1$ was chosen here only for simplicity. What is important is to ensure that $A_1+B_1e^{-DA_1}K_1(D)$ is a \textit{Hurwitz} matrix (i.e., a matrix of which all eigenvalues have negative real part).

In practice, other choices can be done, which can be more efficient according to such or such criterion. For instance, instead of using the pole-shifting theorem, one could design a stabilizing gain matrix $K_1$ by using a standard Riccati procedure.
\end{remark}

\begin{remark}\label{rem_Lyap1}
From Corollary \ref{corkalman}, we infer that, for every $D\geq 0$, there exists $C_1(D)>0$ (depending smoothly on $D$) such that
\begin{equation}
\frac{d}{dt}V_1(Z_1(t)) = -\Vert Z_1(t)\Vert_{\R^{n+1}}^2 \leq -C_1(D)\, V_1(Z_1(t)),
\end{equation}
where $\Vert\ \Vert_{\R^{n+1}}$ is the usual Euclidean norm in $\R^{n+1}$.
\end{remark}

From Corollary \ref{corkalman}, the feedback $\alpha(t)=K_1(D)Z_1(t)$ stabilizes exponentially the control system \eqref{systfiniArtstein}.
Since $\alpha(t-D)$ is used in the control system \eqref{systfini}, and since in general we are only concerned with prescribing the future of a system, starting at time $0$, we assume that the control system \eqref{systfini} is uncontrolled for $t<0$, and from the starting time $t=0$ on we let the feedback act on the system. In other words, we set
\begin{equation}\label{defalpha}
\alpha(t) = \left\{ \begin{array}{ll}
0 & \textrm{if}\ t<D,\\
K_1(D)Z_1(t) & \textrm{if}\ t\geq D,
\end{array}\right.
\end{equation}
so that, with this control, the control system \eqref{systfini} with input delay is written as
$$
X_1'(t) = A_1X_1(t) + \chi_{(D,+\infty)}(t)B_1 K_1(D) Z_1(t-D),
$$
with $Z_1$ given by \eqref{transfo_Artstein}.
Here the notation $\chi_E$ stands for the characteristic function of $E$, that is the function defined by $\chi_{E}(t)=1$ whenever $t\in E$ and $\chi_{E}(t)=0$ otherwise.
Using \eqref{transfo_Artstein}, the feedback $\alpha$ defined by \eqref{defalpha} is such that, for all $t<D$,
\begin{subequations}\label{def_alpha}
\begin{equation}
\alpha(t) = 
0
\end{equation}
and, for all $t\geq D$,
\begin{equation}
\begin{array}{rcl}\alpha(t) &=&K_1(D) X_1(t) \\
&&+ K_1(D) \int_{\max(t-D,D)}^t e^{(t-D-s)A_1}B_1\alpha(s)\, ds  .\end{array}\end{equation}\end{subequations}
In other words, the value of the feedback control $\alpha$ at time $t$ depends on $X_1(t)$ and of the controls applied in the past (more precisely, of the values of $\alpha$ over the time interval $(\max(t-D,D),t)$).

\begin{lemma}
When closing the loop with the feedback \eqref{def_alpha}, the control system \eqref{systfini} is exponentially stable.
\end{lemma}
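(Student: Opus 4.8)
The plan is to combine the Artstein equivalence established just before Lemma~\ref{lem:1} with the Hurwitz property from Corollary~\ref{corkalman}: first prove exponential decay of the transformed state $Z_1$, then transfer it to $X_1$ by inverting~\eqref{transfo_Artstein}.

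I would start with the behaviour on the initial window. Since~\eqref{systfini} is uncontrolled for $t<0$ and $\alpha\equiv 0$ on $[0,D)$ by~\eqref{defalpha}, one has $\alpha(t-D)=0$ for $t\in[0,D]$, hence $X_1'(t)=A_1X_1(t)$ there and $X_1(D)=e^{DA_1}X_1(0)$; moreover the integral in~\eqref{transfo_Artstein} vanishes on $[0,D]$ because $\alpha$ is supported in $[D,+\infty)$, so $Z_1(t)=X_1(t)$ and $\|X_1(t)\|\leq e^{\|A_1\|D}\|X_1(0)\|$ on $[0,D]$. Next, differentiating~\eqref{transfo_Artstein} and using $X_1'(t)=A_1X_1(t)+B_1\alpha(t-D)$ gives $\dot Z_1(t)=A_1Z_1(t)+e^{-DA_1}B_1\alpha(t)$, hence, inserting~\eqref{defalpha}, $\dot Z_1(t)=\bigl(A_1+e^{-DA_1}B_1K_1(D)\bigr)Z_1(t)$ for $t\geq D$, an autonomous linear ODE whose matrix is Hurwitz by Corollary~\ref{corkalman} (equivalently, $V_1$ of~\eqref{defV1} decreases along it, cf.\ Remark~\ref{rem_Lyap1}). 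Therefore there exist $\kappa,\nu>0$ with $\|Z_1(t)\|\leq\kappa\,e^{-\nu(t-D)}\|Z_1(D)\|\leq\kappa\,e^{\|A_1\|D}e^{-\nu(t-D)}\|X_1(0)\|$ for all $t\geq D$.

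It then remains to go back to $X_1$. Inverting~\eqref{transfo_Artstein} and using once more that $\alpha$ vanishes before $D$, for $t\geq D$ one has $X_1(t)=Z_1(t)-\int_{\max(t-D,D)}^{t} e^{(t-s-D)A_1}B_1K_1(D)Z_1(s)\,ds$. On the range of integration the exponent $t-s-D$ lies in $[-D,0]$, so $\|e^{(t-s-D)A_1}\|$ is bounded by a constant depending only on $D$; bounding $Z_1$ by the previous estimate and computing the elementary integral gives $\|X_1(t)\|\leq C\,e^{-\nu(t-D)}\|X_1(0)\|$ for $t\geq 2D$, together with a uniform bound $\|X_1(t)\|\leq C\|X_1(0)\|$ for $t\in[D,2D]$. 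Combining this with the bound on $[0,D]$ and enlarging $C$ to absorb the behaviour on the compact set $[0,2D]$, we obtain $\|X_1(t)\|\leq C e^{-\mu t}\|X_1(0)\|$ for all $t\geq 0$ with $\mu=\nu$, which is the asserted exponential stability of the closed loop~\eqref{systfini}.

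The computations involved are essentially routine; the only point requiring care is the last step, namely checking that inverting the Artstein transformation does not spoil the exponential rate. This is the case because the inverse is a bounded Volterra operation whose kernel $s\mapsto e^{(t-s-D)A_1}B_1K_1(D)$ is supported in a sliding window of fixed width $D$, so that convolving an exponentially decaying signal against it again produces an exponentially decaying signal with the same rate.
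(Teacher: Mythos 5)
Your argument is correct and follows essentially the same route as the paper's own proof: exponential decay of $Z_1$ from the Hurwitz closed-loop system \eqref{systfiniArtstein} (Corollary~\ref{corkalman}), then transfer to $X_1$ via the Artstein relation \eqref{transfo_Artstein}, whose integral term is a sliding window of width at most $D$ and therefore preserves the exponential rate. You simply spell out the details (initial window $[0,D]$, explicit constants) that the paper leaves implicit.
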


\begin{proof}
By construction $t\mapsto Z_1(t)$ converges exponentially to $0$, and hence $t\mapsto \alpha(t)$ and thus $t\mapsto \int_{\max(t-D,D)}^t e^{(t-D-s)A_1)}B_1\alpha(s)\, ds$ converges exponentially to $0$ as well. Then the equality \eqref{transfo_Artstein} implies that $t\mapsto X_1(t)$ converges exponentially to $0$.
\end{proof}

\noindent\textbf{Inversion of the Artstein transform.}
We are going to invert the Artstein transform, with two motivations in mind:
\begin{itemize}
\item First of all, it is interesting to express the stabilizing control $\alpha$ (defined by \eqref{defalpha}) directly as a feedback of $X_1$. 
\item Secondly, it is interesting to express the Lyapunov functional $V_1$ (defined by \eqref{defV1}) as a function of $X_1$.
\end{itemize}
For more details on how to invert the Artstein transform and how to use it in practice, we refer the readers to \cite{breschprieurtrelat}. Here, we develop only what is required to perform our stabilization analysis.

We have to solve the fixed point implicit equality \eqref{def_alpha}. For every function $f$ defined on $\R$ and locally integrable, we define 
$$
(T_Df)(t) = K_1(D) \int_{\max(t-D,D)}^t e^{(t-D-s)A_1}B_1 f(s)\, ds.
$$
It follows from \eqref{def_alpha} that $\alpha(t) = K_1(D)X_1(t)+(T_D\alpha)(t)$, for every $t\geq D$. A purely formal computation yields that 
$$
\alpha(t) = \sum_{j=0}^{+\infty} (T_D^j K_1(D)X_1)(t) .
$$
The convergence of the series is not obvious and is proved in the following lemma.

\begin{lemma}\label{lem_alpha}
We have
\begin{equation}\label{alphaexpanded}
\alpha(t) = \left\{ \begin{array}{ll}
0 & \textrm{if}\ t<D,\\
\displaystyle\sum_{j=0}^{+\infty} (T_D^j K_1(D)X_1)(t) & \textrm{if}\ t\geq D,
\end{array}\right.
\end{equation}
and the series is convergent, whatever the value of the delay $D\geq 0$ may be.
\end{lemma}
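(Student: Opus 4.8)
The plan is to read \eqref{def_alpha} for $t\ge D$ as a Volterra integral equation of the second kind, $\alpha=g+T_D\alpha$ with $g:=K_1(D)X_1$, and to prove convergence of its Neumann series by the standard factorial-decay estimate for Volterra operators. The first step is to record the two structural facts about $T_D$ that make this work. For any locally integrable $f$ and any $t\ge D$, the integration in $(T_Df)(t)$ runs over $[\max(t-D,D),t]\subseteq[D,t]$, and for $s$ in that interval the exponent $t-D-s$ lies in $[-D,0]$; hence, with
$$
C_0=C_0(D):=\|K_1(D)\|\,\Bigl(\sup_{0\le\tau\le D}\|e^{-\tau A_1}\|\Bigr)\,\|B_1\|,
$$
one has the bound $\|(T_Df)(t)\|\le C_0\int_D^t\|f(s)\|\,ds$ for every $t\ge D$. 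Moreover $(T_Df)(t)$ depends only on $f|_{[D,t]}$, so by induction $(T_D^jg)(t)$ depends only on $g|_{[D,t]}$; here $g=K_1(D)X_1$ is locally bounded, since the state trajectory $X_1$ is continuous, being obtained from a linear delay differential system by the method of steps. (When $D=0$ one has $T_D=0$ and $\alpha=K_1(0)X_1$, so the series is trivially convergent; we may thus assume $D>0$.)

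The second step is to iterate the operator bound. Fix $T>D$ and set $G_T:=\sup_{[D,T]}\|g\|$. Iterating $\|(T_Df)(t)\|\le C_0\int_D^t\|f(s)\|\,ds$ gives, for $D\le t\le T$,
$$
\|(T_D^jg)(t)\|\ \le\ C_0^j\!\!\int_{D\le s_j\le\cdots\le s_1\le t}\!\!\|g(s_j)\|\,ds_j\cdots ds_1\ \le\ G_T\,\frac{\bigl(C_0(t-D)\bigr)^j}{j!}\ \le\ G_T\,\frac{\bigl(C_0(T-D)\bigr)^j}{j!},
$$
the middle inequality because the simplex $\{D\le s_j\le\cdots\le s_1\le t\}$ has volume $(t-D)^j/j!$. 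Summing, $\sum_{j\ge0}\|(T_D^jg)(t)\|\le G_T\,e^{C_0(T-D)}<\infty$, so the series in \eqref{alphaexpanded} converges absolutely and uniformly on $[D,T]$ for every $T>D$, and therefore defines a locally bounded function on $[D,+\infty)$; together with $\alpha(t)=0$ for $t<D$ this is the claimed formula.

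It then remains to verify that this $\alpha$ actually satisfies the fixed point equation \eqref{def_alpha}, and (for the feedback to be unambiguous) that it is the only solution. Since $T_D$ is continuous for uniform convergence on compact time intervals — again by $\|(T_Df)(t)\|\le C_0\int_D^t\|f\|$ — it may be applied term by term: $T_D\alpha=\sum_{j\ge0}T_D^{j+1}g=\sum_{j\ge1}T_D^jg=\alpha-g$ on $[D,+\infty)$, i.e. $\alpha(t)=K_1(D)X_1(t)+(T_D\alpha)(t)$ for $t\ge D$, which is precisely \eqref{def_alpha}. For uniqueness, if $h$ denotes the difference of two solutions then $h=0$ on $(-\infty,D)$ and $h=T_Dh=\cdots=T_D^jh$ on $[D,+\infty)$, so by the same estimate $\|h(t)\|\le\|h\|_{L^\infty([D,t])}\,(C_0(t-D))^j/j!\to0$ as $j\to\infty$, whence $h\equiv0$. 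The argument is essentially routine; the only point needing a little care is the domain-of-dependence bookkeeping — keeping the nested integrals inside $[D,t]$ so the bound can be closed with $\|g\|_{L^\infty([D,t])}$, which is the analytic counterpart of the observation that \eqref{def_alpha} is in fact a chain of Volterra equations on the successive intervals $[D,2D],[2D,3D],\dots$ — together with the (mild) need to know that $g=K_1(D)X_1$ is locally bounded despite the formal circularity in the definition of the closed loop.
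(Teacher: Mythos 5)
Your proof is correct and follows essentially the same route as the paper's: both establish convergence of the Neumann series $\sum_{j\ge 0} T_D^j K_1(D)X_1$ via an iterated Volterra-type bound with factorial decay, you by collapsing all integration domains into $[D,t]$ and using the volume $(t-D)^j/j!$ of the simplex $\{D\le s_j\le\cdots\le s_1\le t\}$, the paper by tracking the tighter domains through the auxiliary kernels ${\varphi_D}_j$ and a Fubini argument (and your uniform bound $\sup_{0\le\tau\le D}\Vert e^{-\tau A_1}\Vert$ is in fact cleaner than the paper's $e^{(t-jD-\tau)\Vert A_1\Vert}$). The one genuine addition is that you also check that the sum satisfies the fixed-point equation \eqref{def_alpha} and that this Volterra equation has a unique solution --- a step the paper leaves as ``a purely formal computation'' but which is what actually identifies the convergent series with the feedback $\alpha$ defined by \eqref{defalpha}; this is a tightening of the same argument rather than a different method.
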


Note that the value of the feedback $\alpha$ at time $t$,
\begin{multline*}
\alpha(t) = K_1(D)X_1(t) \\+ K_1(D) \int_{\max(t-D,D)}^t e^{(t-D-s)A_1}B_1K_1(D) X_1(s)\, ds \\
+ K_1(D) \int_{\max(t-D,D)}^t e^{(t-D-s)A_1}B_1 K_1(D)\\ \int_{\max(s-D,D)}^s e^{(s-D-\tau)A_1}B_1K_1(D) X_1(\tau)\, d\tau \, ds \\
\qquad +\cdots
\end{multline*}
depends on the past values of $X_1$ over the time interval $(D,t)$.
Since the feedback is retarded with the delay $D$, the term $\alpha(t-D)$ appearing at the right-hand side of \eqref{systfini} only depends on the values of $X_1(s)$ with $0<s<t-D$, as desired.

\begin{proof}
We define the functions ${\varphi_D}_j$ iteratively by
\begin{equation}\label{defvarphij}
\begin{split}
{\varphi_D}_1(t,\tau)&=1,\\
{\varphi_D}_{j+1}(t,\tau)&=\int_{\max(\tau,t-D)}^{\min(t,\tau+jD)}{\varphi_D}_j(s,\tau)\, ds,\qquad \forall j\in\N^*, 
\end{split}
\end{equation}
for every $t\geq \tau$, and by ${\varphi_D}_{j}(t,\tau)= 0$ if $t<\tau$ and $ j\in\N$. 

Let us prove by induction that
\begin{multline}\label{induc1}
\left\vert (T_D^jK_1(D)X_1)(t) \right\vert \\
\leq \Vert B_1\Vert_{\R^{n+1}}^j \Vert K_1(D)\Vert_{\R^{n+1}}^{j+1} 
\\\times \int_{\max(t-jD,D)}^t {\varphi_D}_j(t,\tau) e^{(t-jD-\tau)\Vert A_1\Vert } \Vert X_1(\tau) \Vert_{\R^{n+1}} \, d\tau ,
\end{multline}
for every $j\in\N^*$.
This is clearly true for $j=1$, since
\begin{equation*}
\begin{array}{l}
\left\vert (T_DK_1(D)X_1)(t) \right\vert 
\\
= \left\vert K_1(D) \int_{\max(t-D,D)}^t e^{(t-D-s)A_1}B_1K_1(D)X_1(s)\, ds \right\vert \\
\leq \Vert B_1\Vert_{\R^{n+1}} \Vert K_1(D)\Vert^2_{\R^{n+1}} \\
\quad \times \int_{\max(t-D,D)}^t e^{(t-D-s) \Vert A_1\Vert} \Vert X_1(s)\Vert_{\R^{n+1}}\, ds .
\end{array}
\end{equation*}
Assume that this is true for an integer $j\in\N^*$, and let us derive the estimate for $j+1$.
Since
$$\begin{array}{l}
(T_D^{j+1}K_1(D)X_1)(t)
\\ = K_1(D)\int_{\max(t-D,D)}^t e^{(t-D-s)A_1}B_1  (T_D^jK_1(D)X_1)(s)    \, ds,
\end{array}$$
we get
\begin{equation*}
\begin{split}
& \left\vert (T_D^{j+1}K_1(D)X_1)(t) \right\vert 
\leq \Vert B_1\Vert_{\R^{n+1}}^{j+1} \Vert K_1(D)\Vert_{\R^{n+1}}^{j+2}   \\
&\;
\times \int_{\max(t-D,D)}^t e^{(t-D-s)\Vert A_1\Vert} \\
&\;\times
 \int_{\max(s-jD,D)}^s {\varphi_D}_j(s,\tau) e^{(s-jD-\tau)\Vert A_1\Vert } \Vert X_1(\tau) \Vert_{\R^{n+1}} \, d\tau  \, ds ,
\end{split}
\end{equation*}
and, from the Fubini theorem, noting that $(\tau,s)$ is such that 
$$
\left\{\begin{array}{l}\max(s-jD,D) \leq \tau \leq s \, , \\ \max(t-D,D) \leq s \leq t,\end{array}\right.
$$ 
if and only if
$$
\left\{\begin{array}{l}\max(t-(j+1)D,D) \leq \tau \leq t \, , \\  \max(\tau ,t-D)  \leq s \leq \min(t,\tau + jD ) ,\end{array}\right.
$$
we get the estimate
\begin{equation*}
\begin{split}
& \left\vert (T_D^{j+1}K_1(D)X_1)(t) \right\vert \leq \Vert B_1\Vert_{\R^{n+1}}^{j+1} \Vert K_1(D)\Vert_{\R^{n+1}}^{j+2}   \\
&\qquad
\times \int_{\max(t-(j+1)D,D)}^t \left(\int_{\max(\tau,t-D)}^{\min(t,\tau+jD)}{\varphi_D}_j(s,\tau)\, ds\right)\\
&\qquad \times \ e^{(t-(j+1)D-\tau)\Vert A_1\Vert}\Vert X_1(\tau) \Vert_{\R^{n+1}} \, d\tau ,
\end{split}
\end{equation*}
and the desired estimate for $j+1$ follows by definition of ${\varphi_D}_{j+1}$.

Now, we claim that
\begin{equation}\label{estimvarphij}
0\leq {\varphi_D}_{j}(t,\tau)\leq \frac{(t-\tau)^{j-1}}{(j-1)!},
\end{equation}
for every $j\in\N^*$. 
Indeed, nonnegativity is obvious and the right-hand side estimate easily follows from the fact that ${\varphi_D}_{j+1}(t,\tau)\leq \int_\tau^t{\varphi_D}_j(s,\tau)\, ds$ and from a simple iteration argument. 

Finally, from \eqref{induc1} and \eqref{estimvarphij}, we infer that
\begin{equation*}
\begin{array}{l}
\left\vert (T_D^jK_1(D)X_1)(t) \right\vert  \\
\leq \Vert B_1\Vert_{\R^{n+1}}^{j} \Vert K_1(D)\Vert_{\R^{n+1}}^{j+1}    \\
\qquad
\times \int_{\max(t-jD,D)}^t   \frac{(t-\tau)^{j-1}}{(j-1)!}   e^{(t-jD-\tau)\Vert A_1\Vert}\Vert X_1(\tau) \Vert_{\R^{n+1}} \, d\tau \\
 \leq  \Vert B_1\Vert_{\R^{n+1}}^{j} \Vert K_1(D)\Vert_{\R^{n+1}}^{j+1} \frac{(t-D)^j}{(j-1)!} \max_{D\leq s\leq t}\Vert X_1(s)\Vert_{\R^{n+1}},
\end{array}
\end{equation*}
whence the convergence of the series in \eqref{alphaexpanded}.
\end{proof}

\begin{remark}\label{remark4}
It is also interesting to express $Z_1$ in function of $X_1$, that is, to invert the equality
\begin{equation}\label{X1enfonctiondeZ1}
Z_1(t) \!\!= \!\!X_1(t) +\! \int_{(t-D,t)\cap(D,+\infty)} e^{(t-s-D)A_1}B_1 K_1(D) Z_1(s) \, ds
\end{equation}
coming from \eqref{transfo_Artstein} and \eqref{defalpha}. 
Although it is technical and not directly useful to derive the exponential stability of $Z_1$, it will however allow us to express the Lyapunov functional $V_1$ defined by \eqref{defV1}.
Note that
\begin{equation}\label{ref_intervalle}
(t-D,t)\cap(D,+\infty) = \left\{\begin{array}{ll}
\emptyset & \textrm{if}\ t<D, \\
(D,t) & \textrm{if}\ D<t<2D, \\
(t-D,t) & \textrm{if}\ 2D<t.
\end{array}\right.
\end{equation}
In particular if $t<D$ then $Z_1(t)=X_1(t)$.
We have the following result.

\begin{lemma}\label{lemtechnique}
For every $t\in\R$, there holds
\begin{equation}\label{X1Z1}
X_1(t) = Z_1(t) - \int_{(t-D,t)\cap(D,+\infty)} f(t-s) X_1(s)\, ds,
\end{equation}
where $f$ is defined as the unique solution of the fixed point equation
$$
f(r) =  f_0(r)  + (\tilde T_D f)(r),
$$
with $f_0(r)=e^{(r-D)A_1} B_1K_1(D)$ and
$$
(\tilde T_D f)(r) = \int_0^{r} e^{(r-\tau-D)A_1} B_1K_1(D) f(\tau)\, d\tau.
$$
Moreover, we have
\begin{equation*}
\begin{array}{rcl}
f(r) &\!\!=&\!\!\sum_{j=0}^{+\infty} (\tilde T_D^j f_0)(r) \\
& \!\!=& \!\!e^{(r-D)A_1} B_1K_1(D)  \\
&&\!\!\!+ \int_0^{r} e^{(r-\tau-D)A_1} B_1K_1(D) e^{(\tau-D)A_1} B_1K_1(D)   \, d\tau \\
&&\!\!\!+ 
\int_0^{r} e^{(r-\tau-D)A_1} B_1K_1(D)\\
&&\!\!\!\times \int_0^{\tau} e^{(\tau-s-D)A_1} B_1K_1(D) e^{(s-D)A_1} B_1K_1(D)   \, ds   \, d\tau \\
&&\!\!\!+ \cdots
\end{array}
\end{equation*}
and the series is convergent, whatever the value of the delay $D\geq 0$ may be.
\end{lemma}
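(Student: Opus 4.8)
The plan is to handle the two claims in turn---the well-posedness and series representation of $f$, then the identity \eqref{X1Z1}. For the first claim I would define $f$ directly by the series $f(r)=\sum_{j\geq 0}(\tilde T_D^{j}f_0)(r)$ and prove its convergence by the same kind of factorial bound as in the proof of Lemma \ref{lem_alpha}, which is in fact even simpler here since $\tilde T_D$ carries no $D$-dependent truncation: $(\tilde T_D^{j}f_0)(r)$ is an integral over the full simplex $\{0<\tau_j<\cdots<\tau_1<r\}$ of a product of $j+1$ matrices $f_0(\cdot)$, each of norm at most $\|B_1\|_{\R^{n+1}}\|K_1(D)\|_{\R^{n+1}}\,e^{(r+D)\|A_1\|}$, whence $\|(\tilde T_D^{j}f_0)(r)\|\leq\big(\|B_1\|_{\R^{n+1}}\|K_1(D)\|_{\R^{n+1}}\big)^{j+1}e^{(j+1)(r+D)\|A_1\|}\tfrac{r^{j}}{j!}$, which is summable, uniformly for $r$ in compact sets. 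This makes $f$ a well-defined continuous matrix-valued function; applying $\tilde T_D$ termwise (legitimate by the local uniform convergence) gives $\tilde T_D f=\sum_{j\geq 1}\tilde T_D^{j}f_0=f-f_0$, so $f$ solves the fixed-point equation, and uniqueness follows from the usual Volterra/Gronwall argument: a difference $g$ of two solutions satisfies $g=\tilde T_D g$, hence $\|g(r)\|\leq C\int_0^{r}\|g(\tau)\|\,d\tau$ on each $[0,R]$ and therefore $g\equiv 0$. The displayed expansion of $f$ is then simply the first terms of $\sum_{j}\tilde T_D^{j}f_0$ written out via $e^{(r-\tau-D)A_1}B_1K_1(D)=f_0(r-\tau)$.

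For \eqref{X1Z1} I would start from the Artstein transform \eqref{transfo_Artstein}; since $\alpha\equiv 0$ on $(-\infty,D)$ it reads $X_1(t)=Z_1(t)-\int_{(t-D,t)\cap(D,+\infty)}e^{(t-s-D)A_1}B_1\,\alpha(s)\,ds$, and I would then insert the expansion \eqref{alphaexpanded} of $\alpha$ as a series in $X_1$. After combining $e^{(t-s-D)A_1}B_1$ with the leading $K_1(D)$ of $T_D^{j}$, the $j$-th term is a $(j+1)$-fold nested integral of a product of the matrices $f_0(\cdot)$ tested against $X_1$; performing the very same sequence of Fubini exchanges carried out with the functions ${\varphi_D}_{j}$ in the proof of Lemma \ref{lem_alpha} collapses this nested integral onto the single outer variable $s\in(t-D,t)\cap(D,+\infty)$ with matrix kernel $(\tilde T_D^{j}f_0)(t-s)$. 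Summing over $j$---the interchange of $\sum$ and $\int$ being justified by the bounds above together with the exponential decay of $X_1$ already established---then yields $\int_{(t-D,t)\cap(D,+\infty)}f(t-s)X_1(s)\,ds$, i.e. \eqref{X1Z1}. Equivalently one may read \eqref{X1enfonctiondeZ1} as $Z_1(t)=X_1(t)+\int_{(t-D,t)\cap(D,+\infty)}f_0(t-s)Z_1(s)\,ds$ and iterate it, substituting for $Z_1$ under the integral; this leads to the same computation.

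The hard part will be this Fubini/telescoping step: one must verify that the $D$-dependent limits $\max(\,\cdot-D,D\,)$ appearing at each layer of $T_D$ reorganize, after summation over $j$, into the clean limits $\int_0^{r}$ defining $\tilde T_D$---this is exactly what the ${\varphi_D}_{j}$ bookkeeping serves in Lemma \ref{lem_alpha}---while keeping uniform-in-$t$ control of all the series so that the interchanges of summation and integration are valid. Everything else (the fixed-point identity, uniqueness, the explicit expansion) is routine once convergence of the series is in hand, and I expect the whole argument to run parallel to, and be slightly lighter than, the proof of Lemma \ref{lem_alpha}.
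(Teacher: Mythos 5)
Your treatment of $f$ itself (convergence of the Neumann series by the factorial bound, termwise application of $\tilde T_D$, uniqueness via Gronwall) is correct and matches the estimate the paper uses. The gap is precisely at the step you flag as ``the hard part'', and it is not a bookkeeping difficulty that extra care will resolve: the Fubini reorganization does \emph{not} collapse the $j$-th term onto the window $(t-D,t)\cap(D,+\infty)$. As the Fubini computation inside the proof of Lemma~\ref{lem_alpha} already records, the innermost variable of $(T_D^{j}K_1(D)X_1)$ ranges over $(\max(t-(j+1)D,D),\,t)$; for $t>2D$ and $j\geq 1$ this strictly contains $(t-D,t)$. On the sub-window $(t-D,t)$ all the constraints $s_{i+1}>s_i-D$ are inactive, so there you do recover the full simplex and the kernel $(\tilde T_D^{j}f_0)(t-\sigma)$; but you also pick up nonzero contributions supported on $(D,t-D)$, with truncated inner limits (e.g.\ $\int_{t-D-\sigma}^{D}$ instead of $\int_0^{t-\sigma}$ at $j=1$), and these do not cancel upon summation over $j$. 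So your computation yields \eqref{X1Z1} \emph{plus} an extra integral over $(D,t-D)$ that the statement omits.

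In fact no repair is possible, because identity \eqref{X1Z1} with this $f$ is false for $t$ large. Take the instance $n=0$, $A_1=0$, $B_1=1$, $K_1(D)=k<0$: then $f_0\equiv k$, $f(r)=ke^{kr}$, the closed loop gives $Z_1(t)=Z_1(D)e^{k(t-D)}$ for $t\geq D$, and for $t>2D$ one computes directly $Z_1(t)-X_1(t)=k\int_{t-D}^{t}Z_1(s)\,ds=(1-e^{-kD})Z_1(t)$, i.e.\ $X_1(t)=e^{-kD}Z_1(t)$. On the other hand, for $t>3D$,
\begin{equation*}
\int_{t-D}^{t}f(t-s)X_1(s)\,ds=\int_{t-D}^{t}ke^{k(t-s)}\,e^{-kD}Z_1(t)e^{-k(t-s)}\,ds=kD\,e^{-kD}Z_1(t),
\end{equation*}
and $kD\,e^{-kD}=1-e^{-kD}$ only when $kD=0$. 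The true representation either carries a kernel supported on all of $(D,t)$ (reaching back $jD$ at order $j$), or a different kernel on $(t-D,t)$ obtained from the steady-state relation $Z_1(s)=e^{(s-t)(A_1+e^{-DA_1}B_1K_1(D))}Z_1(t)$. Be aware that the paper's own proof in Section~\ref{sec_proof_lemtechnique} takes a different route (postulating a kernel $\Phi_D(t,s)$ and deriving the Volterra equation \eqref{volterra}) but stumbles at the same point: the assertion that $\Phi_D(t,s)=0$ for $s<t-D$ ``is a solution'' is incompatible with \eqref{volterra}, which forces $\Phi_D(t,s)=\int_{t-D}^{\min(t,s+D)}f_0(t-\tau)\Phi_D(\tau,s)\,d\tau\neq 0$ for $s\in(t-2D,t-D)$. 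So you should not expect to close your argument by borrowing from that proof either; the statement itself needs to be corrected before it can be proved.
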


The proof of this lemma is done in Section \ref{sec_proof_lemtechnique}.

With this expression and using (\ref{X1enfonctiondeZ1}) in Remark \ref{remark4}, the feedback $\alpha$ can be as well written as
\begin{equation*}
\begin{array}{rcl}
\alpha(t)&=& \chi_{(D,+\infty)}(t) K_1(D)Z_1(t) \\
& = &\chi_{(D,+\infty)}(t) K_1(D) X_1(t) 
\\&&+ K_1(D)\int_{(t-D,t)\cap(D,+\infty)} f(t-s) X_1(s)\, ds ,
\end{array}
\end{equation*}
and we recover of course the expression \eqref{alphaexpanded} derived in Lemma \ref{lem_alpha}.
\end{remark}

Plugging this feedback into the control system \eqref{systfini} yields, for $t>D$, the closed-loop system
\begin{equation}\label{closed-loop_system_complique}
\begin{split}
X'_1(t) &=A_1X_1(t) + B_1\alpha(t-D) \\
&= A_1X_1(t) + B_1K_1(D)X_1(t-D) 
\\ &\quad +B_1 K_1(D) \int_{(t-D,t)\cap(D,+\infty)} f(t-s) X_1(s)\, ds  ,
\end{split}
\end{equation}
which is, as said above, exponentially stable. Moreover, the Lyapunov function $V_1$, which is exponentially decreasing according to Remark \ref{rem_Lyap1}, can be written as
\begin{equation*}
\begin{array}{c}V_1(t) = \frac{1}{2} \left(X_1(t)+ \int_{I_t(D)} f(t-s) X_1(s)\, ds\right)^\top P(D)\\
\times \left(X_1(t)+ \int_{I_t(D)} f(t-s) X_1(s)\, ds\right) .
\end{array}\end{equation*}
with $I_t(D) = (t-D,t)\cap(D,+\infty)$.
We stress once again that the above feedback and Lyapunov functional stabilize the system whatever the value of the delay may be.

\subsection{Exponential stability of the entire system in closed-loop}
In order to prove that the feedback $\alpha$ designed above stabilizes the entire system \eqref{sys-dim-infinie}, we have to take into account the rest of the system (consisting of modes that are naturally stable). What has to be checked is whether the delay control part might destabilize this infinite-dimensional part or not.

Let $(u_D(\cdot),w(\cdot))$ denote a solution of \eqref{newzero} in which we choose the control $\alpha$ in the feedback form designed previously, such that $u_D(0)=0$ and $w(0)=0$.
Here, we make a slight abuse of notation, since $w(t)$ designates the solution $w(t,\cdot)\in H^2(0,L)\cap H^1_0(0,L)$ satisfying
\begin{equation}\label{eq444}
\begin{split}
& u_D' = \alpha,\quad w' = Aw+a\alpha_D+b\alpha ,\\
& u_D(0)=0, \quad w(0,\cdot)=0.
\end{split}
\end{equation}
Let $M(D)$ be a positive real number such that
\begin{multline}\label{defM}
M(D)\ > \  \Vert b \Vert_{L^2(0,L)}^2 \Vert K_1(D)\Vert_{\R^{n+1}}^2 \\
+ \max\left(2 \Vert a\Vert^2_{L^2(0,L)},  \frac{\max(\lambda_1,\ldots,\lambda_n) }{\lambda_{\min}(P(D))} \right)
\\\times \max\left(1,D e^{2D\Vert A_1\Vert} \Vert B_1\Vert_{\R^{n+1}}^2 \Vert K_1(D)\Vert_{\R^{n+1}}^2\right)  ,
\end{multline}
where 
$$
\Vert K_1(D)\Vert_{\R^{n+1}}^2 = \sum_{j=0}^{n} k_j(D)^2 ,
\qquad
\Vert B_1\Vert_{\R^{n+1}}^2 = 1+\sum_{j=1}^n b_j^2 ,
$$
$\Vert A_1\Vert$ is the usual matrix norm induced from the Euclidean norm of $\R^{n+1}$, and $\lambda_{\min}(P(D))>0$ is the smallest eigenvalue of the symmetric positive definite matrix $P(D)$. The precise value of $M(D)$ is not important however. What is important in what follows is that $M(D)>0$ is large enough.

We set
\begin{equation}\label{defVD}
\begin{array}{rcl}
V_D(t) &= &M(D)\,V_1(t) + M(D)\, \int_{(t-D,t)\cap(D,+\infty)} V_1(s)\, ds \\
&&- \frac{1}{2}\langle w(t),Aw(t)\rangle_{L^2(0,L)} \\
&=& \frac{M(D)}{2}Z_1(t)^\top P(D)Z_1(t) 
\\&&+ \frac{M(D)}{2} \int_{(t-D,t)\cap(D,+\infty)} Z_1(s)^\top P(D)Z_1(s)\, ds \\
&&
 - \frac{1}{2}\sum_{j=1}^{+\infty} \lambda_jw_j(t)^2.
\end{array}
\end{equation}
We are going to prove that $V_D(t)$ is positive and decreases exponentially to $0$. This Lyapunov functional consists of three terms. The two first terms stand for the unstable finite-dimensional part of the system. As we will see, the integral term is instrumental in order to tackle the delayed terms. The third term stands for the infinite-dimensional part of the system. In this infinite sum actually all modes are involved, in particular those that are unstable. Then the two first terms of \eqref{defVD}, weighted with $M(D)>0$, can be seen as corrective terms and this weight $M(D)>0$ is chosen large enough so that $V_D(t)$ be indeed positive.
More precisely,
\begin{equation}\label{12:55}
- \frac{1}{2}\sum_{j=1}^{+\infty} \lambda_jw_j(t)^2 = - \frac{1}{2}\sum_{j=1}^n \lambda_jw_j(t)^2 - \frac{1}{2}\sum_{j=n+1}^\infty \lambda_jw_j(t)^2,
\end{equation}
where $\lambda_j\geq 0$ for every $j\in\{1,\ldots,n\}$ and $\lambda_j\leq-\eta<0$ for every $j>n$ (see \eqref{refeta}). 
Therefore the second term of \eqref{12:55} is positive and the first term, which is nonpositive, is actually compensated by the first term of $V_D(t)$ since $M(D)$ is large enough, as proved in the following more precise lemma.

\begin{lemma} \label{lemma2.4}
There exists $C_2(D)>0$ such that
\begin{equation}\label{equiv2}
V_D(t) \geq C_2(D) \left( u_D(t)^2 + \Vert w(t)\Vert_{H^1_0(0,L)}^2 \right), 
\end{equation}
for every $t\geq 0$. 
\end{lemma}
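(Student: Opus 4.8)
The plan is to prove \eqref{equiv2} as a pointwise‑in‑$t$ inequality, in two moves: first bound $u_D(t)^2+\Vert w(t)\Vert_{H^1_0(0,L)}^2$ from above by the spectral coordinates that appear in $V_D$, then bound $V_D(t)$ from below by those same coordinates. The largeness of $M(D)$ enters only in the second move.

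First I would control the $H^1_0$‑norm. Since $w(t,\cdot)\in H^2(0,L)\cap H^1_0(0,L)$ and $-w_{xx}=c\,w-Aw$, integration by parts gives $\Vert w(t)\Vert_{H^1_0(0,L)}^2=\langle c\,w(t),w(t)\rangle_{L^2(0,L)}-\sum_{j\ge1}\lambda_j w_j(t)^2\le\Vert c\Vert_{L^\infty(0,L)}\Vert w(t)\Vert_{L^2(0,L)}^2-\sum_{j\ge1}\lambda_j w_j(t)^2$ (I use the $H^1_0$‑seminorm, equivalent to the full norm by Poincar\'e). Splitting the series at $n$: for $j\le n$ one has $\lambda_j\ge0$, hence $-\lambda_j w_j^2\le0$; for $j>n$, \eqref{refeta} gives $\lambda_j<-\eta$, hence $w_j^2\le(-\lambda_j/\eta)w_j^2$. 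Using $u_D(t)^2+\sum_{j=1}^n w_j(t)^2=\Vert X_1(t)\Vert_{\R^{n+1}}^2$ this yields
\[
u_D(t)^2+\Vert w(t)\Vert_{H^1_0(0,L)}^2\ \le\ \max(1,\Vert c\Vert_{L^\infty(0,L)})\,\Vert X_1(t)\Vert_{\R^{n+1}}^2+\Bigl(1+\tfrac{\Vert c\Vert_{L^\infty(0,L)}}{\eta}\Bigr)\Bigl(-\sum_{j>n}\lambda_j w_j(t)^2\Bigr),
\]
so it suffices to bound $V_D(t)$ from below by a positive multiple of each of the two terms on the right.

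For the lower bound I would rewrite \eqref{defVD} as $V_D(t)=M(D)V_1(t)+M(D)\int_{I_t(D)}V_1(s)\,ds-\tfrac12\sum_{j=1}^n\lambda_j w_j(t)^2+\tfrac12\sum_{j>n}(-\lambda_j)w_j(t)^2$, where $I_t(D)=(t-D,t)\cap(D,+\infty)$ and, by positive definiteness of $P(D)$, $V_1(s)=\tfrac12 Z_1(s)^\top P(D)Z_1(s)\ge\tfrac12\lambda_{\min}(P(D))\Vert Z_1(s)\Vert_{\R^{n+1}}^2\ge0$. The last sum is nonnegative and at least $\tfrac12\bigl(-\sum_{j>n}\lambda_j w_j(t)^2\bigr)$, so it already dominates the second term above up to the fixed constant $1+\Vert c\Vert_{L^\infty(0,L)}/\eta$. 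The only possibly negative contribution is $-\tfrac12\sum_{j=1}^n\lambda_j w_j(t)^2$, whose absolute value is $\le\tfrac12\max(\lambda_1,\dots,\lambda_n)\Vert X_1(t)\Vert_{\R^{n+1}}^2$, and it must be absorbed by the two nonnegative finite‑dimensional terms. For this I would use the inverted Artstein identity \eqref{X1enfonctiondeZ1}, i.e. $X_1(t)=Z_1(t)-\int_{I_t(D)}e^{(t-s-D)A_1}B_1K_1(D)Z_1(s)\,ds$, together with $\Vert e^{(t-s-D)A_1}\Vert\le e^{D\Vert A_1\Vert}$ for $s\in(t-D,t)$ and Cauchy--Schwarz on an interval of length $\le D$, to obtain
\[
\Vert X_1(t)\Vert_{\R^{n+1}}^2\ \le\ 2\max\!\bigl(1,\,D\,e^{2D\Vert A_1\Vert}\Vert B_1\Vert_{\R^{n+1}}^2\Vert K_1(D)\Vert_{\R^{n+1}}^2\bigr)\Bigl(\Vert Z_1(t)\Vert_{\R^{n+1}}^2+\int_{I_t(D)}\Vert Z_1(s)\Vert_{\R^{n+1}}^2\,ds\Bigr).
\]
Combining this with the lower bound $V_1(Z_1)\ge\tfrac12\lambda_{\min}(P(D))\Vert Z_1\Vert_{\R^{n+1}}^2$ gives $M(D)V_1(t)+M(D)\int_{I_t(D)}V_1(s)\,ds\ge c(D)M(D)\Vert X_1(t)\Vert_{\R^{n+1}}^2$ with $c(D)=\lambda_{\min}(P(D))\big/\bigl(4\max(1,D e^{2D\Vert A_1\Vert}\Vert B_1\Vert_{\R^{n+1}}^2\Vert K_1(D)\Vert_{\R^{n+1}}^2)\bigr)$, hence $V_D(t)\ge\bigl(c(D)M(D)-\tfrac12\max(\lambda_1,\dots,\lambda_n)\bigr)\Vert X_1(t)\Vert_{\R^{n+1}}^2+\tfrac12\bigl(-\sum_{j>n}\lambda_j w_j(t)^2\bigr)$. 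The coefficient of $\Vert X_1(t)\Vert_{\R^{n+1}}^2$ is positive because $M(D)$ is chosen large enough — this is exactly the role of the term $\frac{\max(\lambda_1,\dots,\lambda_n)}{\lambda_{\min}(P(D))}\max(1,D e^{2D\Vert A_1\Vert}\Vert B_1\Vert_{\R^{n+1}}^2\Vert K_1(D)\Vert_{\R^{n+1}}^2)$ inside \eqref{defM} — and feeding this back into the estimate of the previous paragraph gives \eqref{equiv2} with an explicit constant $C_2(D)$ built from $c(D)M(D)-\tfrac12\max_j\lambda_j$, $\max(1,\Vert c\Vert_{L^\infty(0,L)})$ and $1+\Vert c\Vert_{L^\infty(0,L)}/\eta$. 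When $t\le D$ one has $I_t(D)=\emptyset$ and $Z_1(t)=X_1(t)$, and the estimate is then immediate.

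I expect the main obstacle to be precisely the compensation in the last step: the negative contribution of the finitely many unstable modes $w_1,\dots,w_n$ inside the energy term $-\tfrac12\langle w(t),Aw(t)\rangle$ has to be controlled by the finite‑dimensional Lyapunov term, which is naturally expressed through the Artstein variable $Z_1(t)$ rather than $X_1(t)$; since the two differ by a convolution of $Z_1$ over a delay window of length $D$, one is forced both to carry the running integral $\int_{I_t(D)}V_1(s)\,ds$ in $V_D$ and to take $M(D)$ larger than a quantity growing like $e^{2D\Vert A_1\Vert}$. The remaining ingredients — the $H^1_0$ bound and the spectral splitting at $\eta$ — are elementary.
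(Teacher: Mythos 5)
Your proposal is correct and follows essentially the same route as the paper's proof: the inverted Artstein identity \eqref{X1enfonctiondeZ1} with Cauchy--Schwarz to dominate $\Vert X_1(t)\Vert_{\R^{n+1}}^2$ by $\Vert Z_1(t)\Vert_{\R^{n+1}}^2+\int_{I_t(D)}\Vert Z_1(s)\Vert_{\R^{n+1}}^2\,ds$, the $\lambda_{\min}(P(D))$ lower bound, the spectral splitting at $n$ with $M(D)$ large absorbing the unstable modes, and the integration-by-parts identity \eqref{cp:1} for the $H^1_0$-norm (you make the constant explicit via $\eta$ where the paper invokes $\lambda_j\to-\infty$). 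The only differences are cosmetic (order of the two bounds, and trivial constant bookkeeping such as $\max(1,\Vert c\Vert_{L^\infty})$ versus $1+\Vert c\Vert_{L^\infty}$), which do not affect the argument.
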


\begin{proof}
First of all, by definition of $\lambda_{\min}(P(D))$, one has
\begin{equation}\label{lambdamin1}\begin{array}{c}
\frac{M(D)}{2}Z_1(t)^\top P(D)Z_1(t) \qquad\qquad\qquad\qquad\qquad\\
+ \frac{M(D)}{2} \int_{t-D}^t Z_1(s)^\top P(D)Z_1(s)\, ds \qquad\qquad\qquad\\\qquad\qquad\qquad
\geq \ M(D) \frac{\lambda_{\min}(P(D))}{2} \left( \Vert Z_1(t)\Vert_{\R^{n+1}}^2 \right.\\
\qquad\qquad\qquad\qquad\left.+ \int_{t-D}^t \Vert Z_1(s)\Vert_{\R^{n+1}}^2\, ds \right) ,
\end{array}\end{equation}
for every $t\geq 0$.
Besides, recall that, from \eqref{X1enfonctiondeZ1}, one has
$$\begin{array}{rcl}
X_1(t)& \!\!\!\!=& \!\!\!\!Z_1(t) \\
&&\!\!\!\!- \int_{(t-D,t)\cap(D,+\infty)} e^{(t-s-D)A_1}B_1 K_1(D) Z_1(s) \, ds,
\end{array}$$
and therefore, using the Cauchy-Schwarz inequality and the inequality $(a+b)^2\leq 2a^2+2b^2$, it follows that
\begin{equation}\label{est3}
\Vert X_1(t)\Vert_{\R^{n+1}}^2 \!\!\leq \!\!C_3(D)\left( \!\Vert Z_1(t)\Vert_{\R^{n+1}}^2 \!\!+\!\!\int_{t-D}^t \!\Vert Z_1(s)\Vert_{\R^{n+1}}^2\, ds\! \right)\!\!,
\end{equation}
with 
$$
C_3(D) = \max\left(2,2D e^{2D\Vert A_1\Vert} \Vert B_1\Vert_{\R^{n+1}}^2 \Vert K_1(D)\Vert_{\R^{n+1}}^2\right).
$$
We then infer from \eqref{lambdamin1} and \eqref{est3} that
\begin{multline}\label{lambdamin2}
\frac{M(D)}{2}Z_1(t)^\top P(D)Z_1(t)
\\ + \frac{M(D)}{2} \int_{t-D}^t Z_1(s)^\top P(D)Z_1(s)\, ds \\
\geq \ M(D) \frac{\lambda_{\min}(P(D))}{2C_3(D)} \Vert X_1(t)\Vert_{\R^{n+1}}^2 ,
\end{multline}
for every $t\geq 0$.

Using \eqref{12:55} and the definition of $X_1$ in (\ref{eq:def:A1}), we have
\begin{multline}\label{13:00}
- \frac{1}{2}\sum_{j=1}^{+\infty} \lambda_jw_j(t)^2 \geq - \frac{1}{2}\sum_{j=n+1}^\infty \lambda_jw_j(t)^2 \\-  \frac{1}{2} \max_{1\leq j\leq n} ( \lambda_j ) \Vert X_1(t)\Vert_{\R^{n+1}}^2 ,
\end{multline}
and therefore, using \eqref{lambdamin2}, we get
\begin{multline}\nonumber
V_D(t) \geq \left( M(D) \frac{\lambda_{\min}(P(D))}{2C_3(D)} \right.
- \left. \frac{1}{2} \max_{1\leq j\leq n} ( \lambda_j ) \right) \Vert X_1(t)\Vert_{\R^{n+1}}^2 \\ - \frac{1}{2}\sum_{j=n+1}^\infty \lambda_jw_j(t)^2 ,
\end{multline}
for every $t\geq 0$.
By definition of $M(D)$ (see \eqref{defM}), one has $M(D) \frac{\lambda_{\min}(P(D))}{2C_3(D)} -  \frac{1}{2} \max_{1\leq j\leq n} ( \lambda_j ) >0$ and hence there exists $C_4(D)>0$ such that
\begin{equation}\label{13:47}
V_D(t) \geq C_4(D) \left(  \Vert X_1(t)\Vert_{\R^{n+1}}^2  - \frac{1}{2}\sum_{j=n+1}^\infty \lambda_jw_j(t)^2 \right) .
\end{equation}

Using the series expansion $w(t,\cdot)=\sum_{i=1}^{+\infty} w_i(t)e_i(\cdot)$, we have
$$\Vert w(t)\Vert_{H^1_0(0,L)}^2=\sum_{(i,j)\in(\N^*)^2}w_i(t)w_j(t)\int_0^L e_i'(x) e_j'(x)\, dx.$$
By definition, one has $e_n''+ce_n=\lambda_ne_n$ and $e_n(0)=e_n(L)=0$, for every $n\in\N^*$. Integrating by parts and using the orthonormality property, we get
$$
\int_0^L e_i'(x) e_j'(x)\, dx =\int_0^L c(x)e_i(x)e_j(x) \, dx - \lambda_j\delta_{ij}  , 
$$
with $\delta_{ij}=1$ whenever $i=j$ and $\delta_{ij}=0$ otherwise, and thus, for all $t\geq 0$,
\begin{equation}\label{cp:1}
\Vert w(t)\Vert_{H^1_0(0,L)}^2=\int_0^L c(x)w(t,x)^2\,dx - \sum_{j=1}^\infty \lambda_j w_j(t)^2.
\end{equation}
Since $c\in L^\infty(0,L)$, it follows that
\begin{equation*}
\begin{split}
& \Vert w(t)\Vert_{H^1_0(0,L)}^2 \\
&\leq \Vert c\Vert_{L^\infty(0,L)}\ \Vert w(t)\Vert_{L^2(0,L)}^2 - \sum_{j=1}^n \lambda_j w_j(t)^2  - \sum_{j=n+1}^\infty \lambda_j w_j(t)^2 \\
&\leq \Vert c\Vert_{L^\infty(0,L)}\sum_{j=1}^\infty  w_j(t)^2  - \sum_{j=n+1}^\infty \lambda_j w_j(t)^2 \\
&\leq \Vert c\Vert_{L^\infty(0,L)}\Vert X_1(t)\Vert_{\R^{n+1}}^2  - \sum_{j=n+1}^\infty (\lambda_j- \Vert c\Vert_{L^\infty(0,L)}) w_j(t)^2 \\
\end{split}
\end{equation*}
and since $\lambda_j\rightarrow -\infty$ as $j$ tends to $+\infty$, there exists $C_5>0$ such that
$$
\Vert w(t)\Vert_{H^1_0(0,L)}^2 \leq - C_5 \left(  \Vert X_1(t)\Vert_{\R^{n+1}}^2  - \frac{1}{2}\sum_{j=n+1}^\infty \lambda_jw_j(t)^2 \right).
$$
Then \eqref{equiv2} follows from \eqref{13:47}.
\end{proof}

Using \eqref{ref_intervalle}, note that if $t<D$ then the integral term of \eqref{defVD} is equal to $0$ and $Z_1(t)=X_1(t)$, and hence
\begin{equation*}
V_D(t) = \frac{M(D)}{2}X_1(t)^\top P(D)X_1(t)  - \frac{1}{2}\sum_{j=1}^{+\infty} \lambda_jw_j(t)^2 ,
\end{equation*}
for every $t<D$. This remark leads to the following lemma.

\begin{lemma} \label{lemma2.5}
There exists $C_6(D)>0$ such that
\begin{equation}\label{equiv3}
V_D(t) \leq C_6(D) (u_D(t)^2+ \Vert w(t)\Vert_{H^1_0(0,L)}^2 ), 
\end{equation}
for every $t < D$.
\end{lemma}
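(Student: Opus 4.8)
The plan is to exploit the simplification of $V_D(t)$ for $t<D$ recorded just before the statement: since the integral term in \eqref{defVD} is then empty and $Z_1(t)=X_1(t)$, one has $V_D(t) = \frac{M(D)}{2}X_1(t)^\top P(D)X_1(t) - \frac12\sum_{j\geq1}\lambda_j w_j(t)^2$, so it suffices to bound each of these two terms by a constant multiple of $u_D(t)^2 + \Vert w(t)\Vert_{H^1_0(0,L)}^2$. This is the ``easy'' (upper-bound) counterpart of Lemma \ref{lemma2.4}, restricted to the time range where no control has yet acted.

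First I would handle the quadratic form: bound $X_1(t)^\top P(D)X_1(t)$ by $\lambda_{\max}(P(D))\Vert X_1(t)\Vert_{\R^{n+1}}^2$, where $\lambda_{\max}(P(D))$ denotes the largest eigenvalue of $P(D)$, and then note from the definition of $X_1$ in \eqref{eq:def:A1} that $\Vert X_1(t)\Vert_{\R^{n+1}}^2 = u_D(t)^2 + \sum_{j=1}^n w_j(t)^2$. Since the $w_j(t)=\langle w(t,\cdot),e_j(\cdot)\rangle_{L^2(0,L)}$ are Fourier coefficients of $w(t,\cdot)$ in an orthonormal basis, Bessel's inequality gives $\sum_{j=1}^n w_j(t)^2 \leq \Vert w(t)\Vert_{L^2(0,L)}^2$, and the Poincaré inequality on $(0,L)$ upgrades this to a bound by $\frac{L^2}{\pi^2}\Vert w(t)\Vert_{H^1_0(0,L)}^2$. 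This term is thus under control, with a constant depending on $D$ only through $M(D)$ and $\lambda_{\max}(P(D))$.

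Next I would handle the infinite sum $-\frac12\sum_{j\geq1}\lambda_j w_j(t)^2$, whose sign is not a priori clear because $\lambda_1,\dots,\lambda_n\geq0$. The key is the identity \eqref{cp:1} already derived in the proof of Lemma \ref{lemma2.4}, namely $-\sum_{j\geq1}\lambda_j w_j(t)^2 = \Vert w(t)\Vert_{H^1_0(0,L)}^2 - \int_0^L c(x)w(t,x)^2\,dx$; combining it with $c\in L^\infty(0,L)$ and once more the Poincaré inequality bounds this term by $\frac12\big(1 + \frac{L^2}{\pi^2}\Vert c\Vert_{L^\infty(0,L)}\big)\Vert w(t)\Vert_{H^1_0(0,L)}^2$.

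Adding the two bounds yields \eqref{equiv3} for a suitable $C_6(D)>0$. I do not expect a genuine obstacle: the only mild subtlety is that the unstable modes $w_1,\dots,w_n$ contribute with the ``wrong'' sign to the infinite sum, but \eqref{cp:1} repackages that sum as an $H^1_0$ norm minus a zeroth-order term, which is exactly what makes the estimate routine. The restriction $t<D$ is what keeps the argument short, since it removes the Artstein-type integral terms whose bookkeeping would otherwise be needed.
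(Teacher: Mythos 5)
Your proposal is correct and follows essentially the same route as the paper: for $t<D$ the functional reduces to $\frac{M(D)}{2}X_1(t)^\top P(D)X_1(t)-\frac12\sum_{j\geq1}\lambda_j w_j(t)^2$, the infinite sum is controlled via the identity \eqref{cp:1} together with $c\in L^\infty(0,L)$ and the Poincar\'e inequality, and the quadratic form is controlled via $\lambda_{\max}(P(D))$, Parseval/Bessel and Poincar\'e. Your write-up is in fact slightly more explicit than the paper's (which leaves the $X_1$-term implicit), but there is no substantive difference.
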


\begin{proof}
Using \eqref{cp:1}, one has
$$ \begin{array}{rcl}
- \sum_{j=1}^{+\infty} \lambda_jw_j(t)^2
&\leq& \Vert w(t)\Vert_{H^1_0(0,L)}^2 
\\&&\quad + \Vert c\Vert_{L^\infty(0,L)} \ \Vert w(t)\Vert_{L^2(0,L)}^2 
\\&\leq&  C_8(D)\Vert w(t)\Vert_{H^1_0(0,L)}^2,\end{array}
$$
and then the lemma follows from the fact that $\Vert w(t)\Vert_{L^2(0,L)}^2 \leq L \Vert w(t)\Vert_{H^1_0(0,L)}^2$, obtained by the Poincar\'e inequality.
\end{proof}

\begin{lemma}\label{lem5}
The functional $V_D$ decreases exponentially to $0$.
\end{lemma}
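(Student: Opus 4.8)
The plan is to establish, by a standard Lyapunov argument, that there are constants $\gamma(D)>0$ and $C(D)>0$ with $V_D(t)\le C(D)e^{-\gamma(D)t}$ for all $t>0$; together with $V_D\ge 0$ (Lemma~\ref{lemma2.4}) this is exactly the claim. One first notes that $V_D$ is finite for $t>0$: by parabolic smoothing $t\mapsto w(t,\cdot)$ is smooth with values in $D(A)$, so the sum $\sum_j\lambda_jw_j(t)^2$ converges and may be differentiated termwise, and $t\mapsto Z_1(t)$ is smooth for $t\ne D$. On $[0,2D]$ the closed-loop evolution is explicit and harmless: since $\alpha\equiv 0$ on $(-\infty,D)$ one has $u_D'\equiv 0$, hence $u_D\equiv u_D(0)$ and $w(t)=e^{tA}w(0)$ there, so $t\mapsto V_D(t)$ is continuous, hence bounded, on $[\varepsilon,2D]$ for every $\varepsilon>0$. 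It therefore suffices to prove $\dot V_D(t)\le-\gamma(D)V_D(t)$ for $t\ge 2D$ and conclude by Gr\"onwall.

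Fix $t\ge 2D$, so that $(t-D,t)\cap(D,+\infty)=(t-D,t)$. For the two finite-dimensional terms of $V_D$, Remark~\ref{rem_Lyap1} gives $\dot V_1(t)=-\|Z_1(t)\|_{\R^{n+1}}^2$, while by the fundamental theorem of calculus
\[
\frac{d}{dt}\int_{t-D}^{t}V_1(s)\,ds=V_1(t)-V_1(t-D)=\int_{t-D}^{t}\dot V_1(s)\,ds=-\int_{t-D}^{t}\|Z_1(s)\|_{\R^{n+1}}^2\,ds .
\]
Hence these two terms contribute $-M(D)\|Z_1(t)\|_{\R^{n+1}}^2-M(D)\int_{t-D}^{t}\|Z_1(s)\|_{\R^{n+1}}^2\,ds$ to $\dot V_D(t)$, both summands negative. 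This is precisely where the memory term of $V_D$ pays off: it manufactures the dissipation $M(D)\int_{t-D}^t\|Z_1\|^2$ that will absorb the delayed contributions below.

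Next I would differentiate the infinite-dimensional term. Recalling \eqref{newzero}, i.e. $w'=Aw+a\,u_D+b\,u_D'$, that $u_D'(t)=\alpha(t-D)=K_1(D)Z_1(t-D)$, and that $A$ is selfadjoint,
\[
\frac{d}{dt}\Big(-\tfrac12\langle w(t),Aw(t)\rangle\Big)=-\|Aw(t)\|_{L^2(0,L)}^2-\langle Aw(t),a\rangle_{L^2}\,u_D(t)-\langle Aw(t),b\rangle_{L^2}\,\alpha(t-D) .
\]
Young's inequality bounds the two cross terms by $\tfrac12\|Aw(t)\|_{L^2}^2+\|a\|_{L^2}^2u_D(t)^2+\|b\|_{L^2}^2\|K_1(D)\|_{\R^{n+1}}^2\|Z_1(t-D)\|_{\R^{n+1}}^2$. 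In the surviving $-\tfrac12\|Aw\|_{L^2}^2=-\tfrac12\sum_j\lambda_j^2w_j(t)^2$ I keep a quarter and invoke the spectral gap \eqref{refeta}: since $\lambda_j^2\ge\eta|\lambda_j|=-\eta\lambda_j$ for $j>n$, one gets $-\tfrac14\sum_{j>n}\lambda_j^2w_j(t)^2\le-\tfrac{\eta}{2}\big(-\tfrac12\sum_{j>n}\lambda_jw_j(t)^2\big)$, a strictly negative multiple of the nonnegative tail part of the sum in $V_D$; the other quarter, together with the indices $j\le n$ (where $\lambda_j\ge 0$, so $-\tfrac12\lambda_jw_j^2\le 0$), is discarded. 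It then remains to absorb the two positive leftovers: for the first, $u_D(t)^2\le\|X_1(t)\|_{\R^{n+1}}^2\le C_3(D)\big(\|Z_1(t)\|_{\R^{n+1}}^2+\int_{t-D}^t\|Z_1(s)\|_{\R^{n+1}}^2\,ds\big)$ by \eqref{est3}; for the second, since on $[t-D,t]\subset[D,+\infty)$ the Artstein variable obeys the autonomous flow $\dot Z_1=(A_1+e^{-DA_1}B_1K_1(D))Z_1$, a backward estimate along it gives $\|Z_1(t-D)\|_{\R^{n+1}}^2\le c(D)\int_{t-D}^t\|Z_1(s)\|_{\R^{n+1}}^2\,ds$ for some $c(D)>0$. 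Both are then dominated by $M(D)\|Z_1(t)\|^2+M(D)\int_{t-D}^t\|Z_1\|^2$ once $M(D)$ is large enough, which is the role of \eqref{defM}. One is left with $\dot V_D(t)\le-c_1(D)\|Z_1(t)\|_{\R^{n+1}}^2-c_2(D)\int_{t-D}^t\|Z_1(s)\|_{\R^{n+1}}^2\,ds-c_3(D)\big(-\tfrac12\sum_{j>n}\lambda_jw_j(t)^2\big)$ with $c_i(D)>0$, and bounding each of the three terms of $V_D$ from above by the corresponding quantity (e.g. $M(D)V_1(t)\le\tfrac12 M(D)\lambda_{\max}(P(D))\|Z_1(t)\|^2$, the $j\le n$ part of $-\tfrac12\sum\lambda_jw_j^2$ being $\le 0$) yields $\dot V_D(t)\le-\gamma(D)V_D(t)$ for a suitable $\gamma(D)>0$; Gr\"onwall concludes.

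I expect the main obstacle to be precisely the delayed cross term $\langle Aw(t),b\rangle_{L^2}\,\alpha(t-D)=\langle Aw(t),b\rangle_{L^2}K_1(D)Z_1(t-D)$: the quantity $\|Z_1(t-D)\|^2$ is not controlled by $\|Z_1(t)\|^2$ alone and, treated naively, would destroy the sign of $\dot V_D$. The remedy — and the reason $V_D$ carries the extra term $M(D)\int_{t-D}^tV_1(s)\,ds$ and a weight $M(D)$ as large as in \eqref{defM} — is that differentiating this memory term releases the dissipative reservoir $M(D)\int_{t-D}^t\|Z_1(s)\|^2\,ds$, into which $\|Z_1(t-D)\|^2$ is reabsorbed via the backward-in-time bound along the autonomous $Z_1$-flow. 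Keeping the bookkeeping of these competing terms straight, and justifying the termwise differentiation of the infinite sum (which requires $w(t,\cdot)\in D(A)$, guaranteed by parabolic smoothing for $t>0$), is the crux of the computation.
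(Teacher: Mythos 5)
Your proof is correct and follows essentially the same route as the paper: differentiate $V_D$ for $t>2D$, use \eqref{poleshifting} for the two finite-dimensional terms (the memory integral releasing the reservoir $-M(D)\int_{t-D}^t\Vert Z_1(s)\Vert^2\,ds$), apply Young's inequality to the two cross terms coming from $w'=Aw+au_D+bu_D'$, use the spectral gap \eqref{refeta} to turn part of $-\Vert Aw\Vert^2$ into a negative multiple of the tail of $-\frac{1}{2}\langle w,Aw\rangle$, and invoke the largeness of $M(D)$ to absorb the leftovers, ending with $V_D'\leq-\gamma V_D$ and Gr\"onwall. The one genuine difference is your treatment of the delayed cross term: since $u_D'(t)=\alpha(t-D)=K_1(D)Z_1(t-D)$ for $t>2D$, you correctly write it with $Z_1(t-D)$ and reabsorb $\Vert Z_1(t-D)\Vert^2$ into $\int_{t-D}^t\Vert Z_1(s)\Vert^2\,ds$ via a backward estimate along the autonomous $Z_1$-flow, whereas the paper's \eqref{diffV1} displays $Z_1(t)$ there and absorbs it pointwise; your version is the more careful one, at the harmless cost of possibly needing a larger $M(D)$ than the explicit bound \eqref{defM} (which the paper itself notes is not the important point). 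Your explicit handling of the interval $[0,2D]$ and of the termwise differentiation of the spectral sum are welcome additions that the paper leaves implicit.
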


\begin{proof}
Let us compute $V_D'(t)$ for $t>2D$ and state a differential inequality satisfied by $V_D$. 
First of all, it follows from \eqref{poleshifting} (in Corollary \ref{corkalman}) that
$$
\frac{d}{dt} \frac{M(D)}{2}Z_1(t)^\top P(D)Z_1(t) = -M(D) \Vert Z_1(t)\Vert_{\R^{n+1}}^2,
$$
and thus
\begin{equation*}
\begin{split}
\frac{d}{dt} \frac{M(D)}{2}\int_{t-D}^t Z_1(s)^\top P(D)Z_1(s)\, ds 
\\= -M(D) \int_{t-D}^t \Vert Z_1(s)\Vert_{\R^{n+1}}^2 \, ds .
\end{split}
\end{equation*}
Then, using \eqref{eq444}, \eqref{defVD} and the fact that $A$ is selfadjoint, we get
\begin{equation}\label{diffV1}\begin{array}{rcl}
V_D'(t) &=& -M(D) \Vert Z_1(t)\Vert_{\R^{n+1}}^2 \\
&& - M(D) \int_{t-D}^t \Vert Z_1(s)\Vert_{\R^{n+1}}^2 \, ds \\
 &&-  \Vert Aw(t)\Vert_{L^2(0,L)}^2 - \langle Aw(t),a\rangle_{L^2(0,L)}u_D(t) \\&&- \langle Aw(t),b\rangle_{L^2(0,L)}K_1(D)Z_1(t) ,
\end{array}\end{equation}
for every $t>2D$. From the Young inequality, we derive the estimates
\begin{multline}\label{est1}
\left\vert\langle Aw(t),a\rangle_{L^2(0,L)}u_D(t)\right\vert\\ \leq \frac{1}{4}\Vert Aw(t)\Vert^2_{L^2(0,L)}+ \Vert a\Vert^2_{L^2(0,L)} \Vert X_1(t)\Vert^2_{\R^{n+1}} ,
\end{multline}
and
\begin{multline}\label{est2}
\left\vert\langle Aw(t),b\rangle_{L^2(0,L)}K_1(D)Z_1(t)\right\vert\\ \leq \frac{1}{4}\Vert Aw(t)\Vert^2_{L^2(0,L)} + \Vert b \Vert_{L^2(0,L)}^2 \Vert K_1(D)\Vert_{\R^{n+1}}^2 \Vert Z_1(t)\Vert^2_{\R^{n+1}}.
\end{multline}
With the estimates \eqref{est1}, \eqref{est2} and \eqref{est3}, we infer from \eqref{est3} and from \eqref{diffV1} that
$$\begin{array}{rcl}
V_D'(t) &\leq& - \left( M(D)-\Vert b \Vert_{L^2(0,L)}^2 \Vert K_1(D)\Vert_{\R^{n+1}}^2 \right.
\\ &&\left.\qquad - \Vert a\Vert^2_{L^2(0,L)} C_3(D) \right) \Vert Z_1(t)\Vert_{\R^{n+1}}^2  \\
&&- \left( M(D) - \Vert a\Vert^2_{L^2(0,L)} C_3(D) \right) \int_{t-D}^t \Vert Z_1(s)\Vert_{\R^{n+1}}^2 \, ds  \\&&- \frac{1}{2} \Vert Aw(t)\Vert_{L^2(0,L)}^2 .
\end{array}$$
From \eqref{defM}, the real number $M(D)$ has been chose large enough so that 
$$
M(D)-\Vert b \Vert_{L^2(0,L)}^2 \Vert K_1(D)\Vert_{\R^{n+1}}^2  - \Vert a\Vert^2_{L^2(0,L)} C_3(D) >0 $$
and
$$
M(D) - \Vert a\Vert^2_{L^2(0,L)} C_3(D) >0 .
$$
Therefore, there exists $C_7(D)>0$ such that
\begin{equation}\label{14:14}\begin{array}{rcl}
V_D'(t) &\leq &-C_7(D) \left( \Vert Z_1(t)\Vert_{\R^{n+1}}^2 + \int_{t-D}^t \Vert Z_1(s)\Vert_{\R^{n+1}}^2 \, ds    \right) 
\\ &&- \frac{1}{2} \Vert Aw(t)\Vert_{L^2(0,L)}^2 .
\end{array}\end{equation}

Let us provide an estimate of $\Vert Aw(t)\Vert_{L^2(0,L)}^2$. Since $-\lambda_j\leq\lambda_j^2$ for any $j$ large enough, it follows that there exists $C_8>0$ such that
\begin{multline*}
-\frac{1}{2} \langle w(t),Aw(t)\rangle_{L^2(0,L)} \\
= -\frac{1}{2}\sum_{j=1}^{n} \lambda_j w_j(t)^2 -\frac{1}{2}\sum_{j=n}^{+\infty} \lambda_j w_j(t)^2 
\leq -\frac{1}{2}\sum_{j=n}^{+\infty} \lambda_j w_j(t)^2 \\
\leq \frac{1}{2C_8} \sum_{j=1}^{+\infty} \lambda_j^2 w_j(t)^2 
\leq \frac{1}{2C_8} \Vert Aw\Vert_{L^2(0,L)}^2 .
\end{multline*}
Hence it follows from \eqref{14:14} that 
$$\begin{array}{rcl}
V_D'(t) &\leq& -C_7(D) \left( \Vert Z_1(t)\Vert_{\R^{n+1}}^2 + \int_{t-D}^t \Vert Z_1(s)\Vert_{\R^{n+1}}^2 \, ds    \right)  \\
&&- \frac{C_8}{2}  \langle w(t),Aw(t)\rangle_{L^2(0,L)}  .
\end{array}$$
Finally, using \eqref{lambdamin1}, there exists $C_9(D)>0$ such that
$$
V_D'(t) \leq -C_9(D) V_D(t),
$$
for every $t>2D$. Therefore $V_D(t)$ decreases exponentially to $0$.
\end{proof}

From Lemma \ref{lem5}, $V_D(t)$ decreases exponentially to $0$. It follows from Lemmas \ref{lemma2.4} and \ref{lemma2.5} that there exists $C_{10}(D)>0$ and $\mu>0$ such that 
$$\begin{array}{rcl}
u_D(t) ^2 + \Vert w(t)\Vert_{H^1_0(0,L)}^2 &\leq &
C_{10}(D) e^{-\mu t} \\
&&\times (u_D(0) ^2 + \Vert w(0)\Vert_{H^1_0(0,L)}^2 ) ,
\end{array}$$
for every $t\geq 0$. Using \eqref{defw}, Theorem \ref{thm1} follows.

\section{Numerical simulation}\label{secnum}
In this section, we illustrate Theorem \ref{thm1} with an example and a numerical simulation. We take $c(x)=0.5$, for all $x\in(0,L)$, $L=2\pi$ and $D=1$. It is easily checked that, with a null boundary control, there is only one eigenvalue that is positive and thus there is only one mode of \eqref{eqcont0} that is unstable. Using a simple pole-shifting controller on the finite-dimensional linear control system resulting of the unstable part of \eqref{eq:def:A1} (with $(-0.5,-1)$ as poles for the closed-loop system), we compute (with \texttt{Matlab}) a stabilizing delay input for the infinite-dimensional system \eqref{eqcont0}. The overall numerical procedure to compute the controller is based on the discretization of the explicit form of the Artstein transformation for the finite-dimensional unstable part of \eqref{eqcont0} (as done in \cite{breschprieurtrelat} for finite dimensional control system with input delay). Then we discretize the reaction-diffusion equation \eqref{eqcont0} using the first $6$ modes, when closing the loop with this delay controller. We take as initial condition $y^0(x) = x(L-x)$.

The time evolution of the obtained solution is given on Figure \ref{fig:1} and the delayed boundary controller $u_D$ is given on Figure \ref{fig:2}. It can be checked on Figure \ref{fig:1} that, as expected, the solution converges to equilibrium.

\begin{figure}[ht]
	\centering
	\includegraphics[width = 0.9\columnwidth]{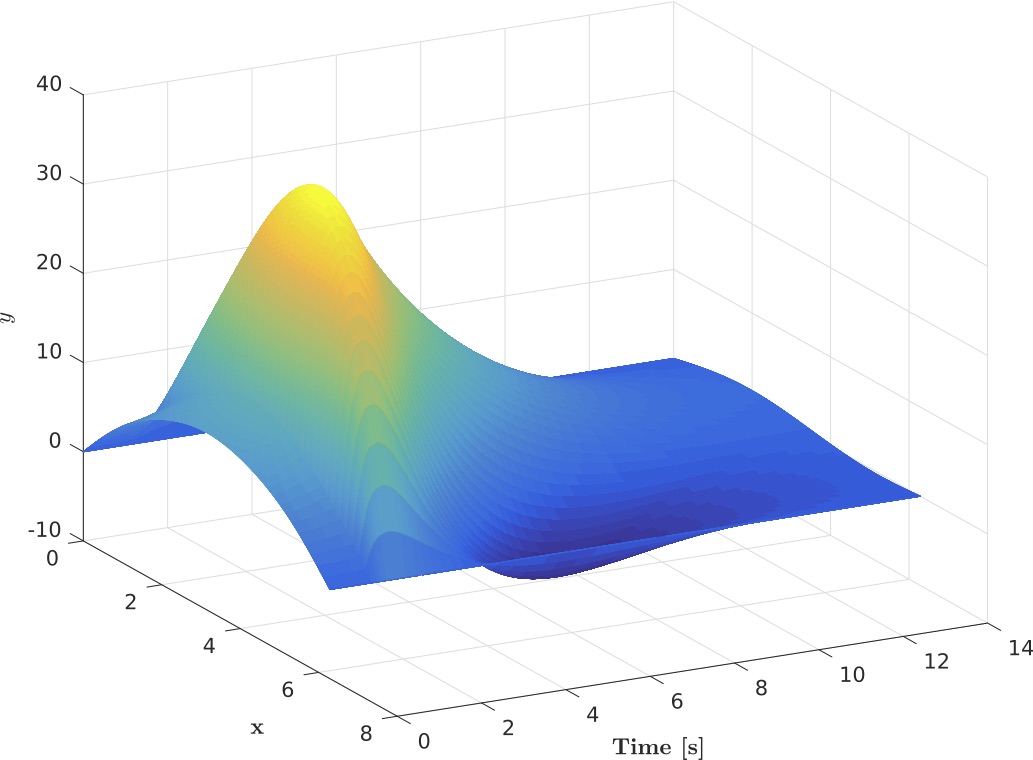} 
	\caption{Time-evolution of the solution of \eqref{eqcont0} with delay boundary controller}
	\label{fig:1}
\end{figure}

\begin{figure}[ht]
	\centering
	\includegraphics[width = 0.7\columnwidth]{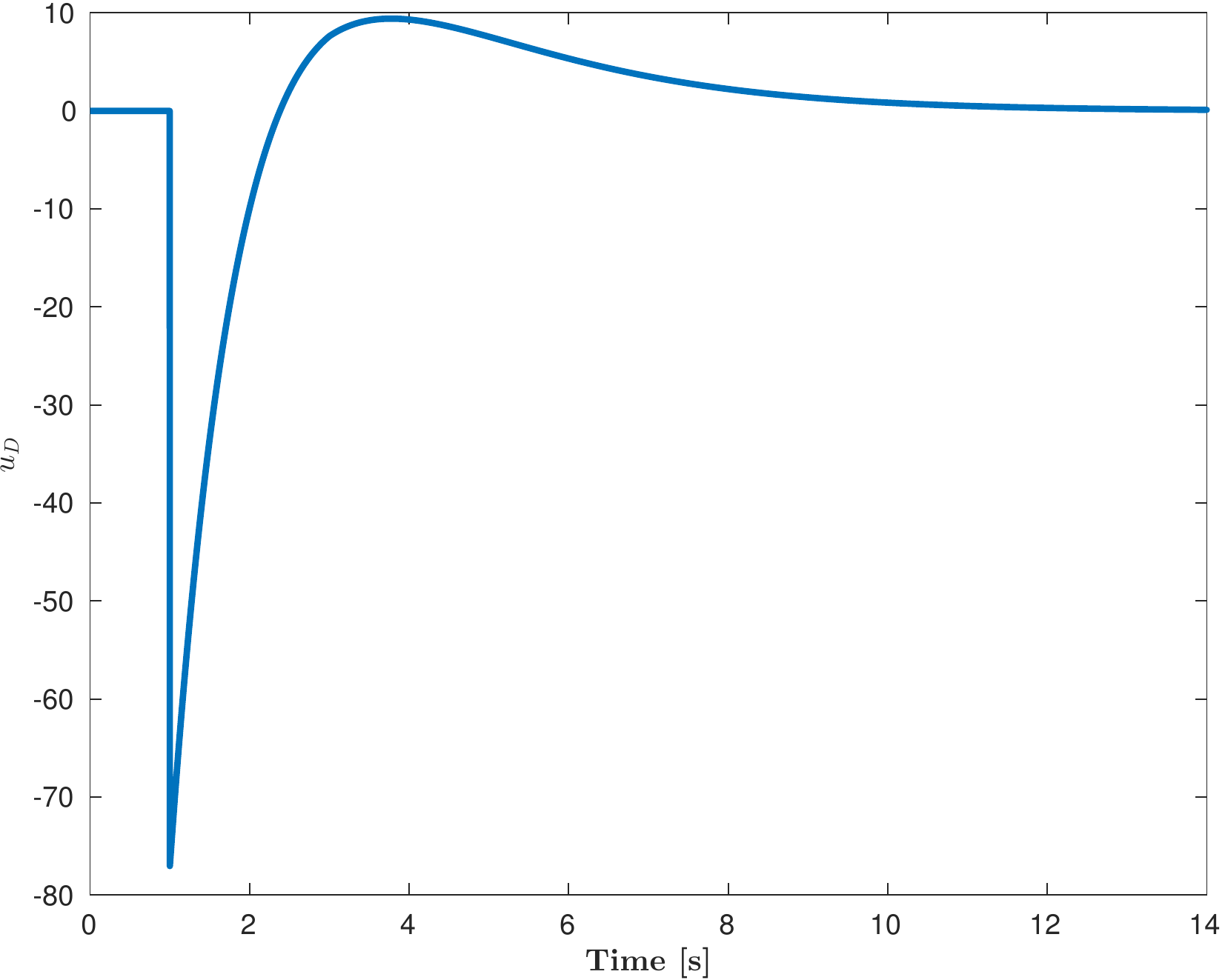} 
	\caption{Delay control $u_D$ for \eqref{eqcont0}}
	\label{fig:2}
\end{figure}


\section{Proof of Lemma \ref{lemtechnique}}\label{sec_proof_lemtechnique}
Let us search the kernel $\Phi_D$ such that
$$
X_1(t) = Z_1(t) - \int_{-\infty}^t \Phi_D(t,s)X_1(s)\, ds,
$$
postulating that $\Phi_D(t,s)=0$ whenever $s>t$.
Using \eqref{X1enfonctiondeZ1} we must have
\begin{equation*}
\begin{split}
& X_1(t) + \int_{-\infty}^t\Phi_D(t,s)X_1(s)\, ds \\
& = X_1(t) + \int_{(t-D,t)\cap(D,+\infty)} e^{(t-s-D)A_1} B_1K_1(D)
\\
&\quad \times \left( X_1(s) + \int_{-\infty}^s \Phi_D(s,\tau)X_1(\tau) \right) d\tau .
\end{split}
\end{equation*}
We have already noted that if $t<D$ then $Z_1(t)=X_1(t)$, and hence in that case $\Phi_D(t,s)=0$. Hence in what follows we assume that $t>D$. Using the Fubini theorem, we get
\begin{multline*}
\int_{-\infty}^t\Phi_D(t,s)X_1(s)\, ds
\\
=  \int_{\max(t-D,D)}^t e^{(t-s-D)A_1} B_1K_1(D) X_1(s) \, ds \\
+  \int_{-\infty}^t \int_{\max(t-D,D,s)}^t e^{(t-\tau-D)A_1} B_1K_1(D) \Phi_D(\tau,s)\, d\tau\ X_1(s)\, ds .
\end{multline*}
Since we would like this equality to hold true for every $X_1$, there must hold
\begin{equation}\label{volterra}
\begin{split}
\Phi_D(t,s) =\ & e^{(t-s-D)A_1} B_1K_1(D) \chi_{(\max(t-D,D),t)}(s) \\
& + \int_{\max(t-D,D,s)}^t e^{(t-\tau-D)A_1} B_1K_1(D) \Phi_D(\tau,s)\, d\tau \\
=\ & e^{(t-s-D)A_1} B_1K_1(D) \chi_{(\max(t-D,D),t)}(s) \\
& + \int_{\max(t-D,D,s)}^{t-s} e^{(t-\tau-D)A_1} B_1K_1(D) \Phi_D(\tau,s)\, d\tau \\
\end{split}
\end{equation}
Let us now solve the implicit equation \eqref{volterra}.

\medskip

First of all, if $D<t<2D$ then $\max(t-D,D)=D$ and \eqref{volterra} yields
$$\begin{array}{rcl}
\Phi_D(t,s) &= &  e^{(t-s-D)A_1} B_1K_1(D) \chi_{(D,t)}(s) 
 \\&&+  \int_{\max(s,D)}^t e^{(t-\tau-D)A_1}B_1K_1(D) \Phi_D(\tau,s)\, d\tau .
\end{array}$$
There are two subcases.

If $s<D$ or if $s>t$ then clearly $\Phi_D(t,s)=0$ is a solution.

If $D<s<t$ then
\begin{equation*}
\begin{split}
\Phi_D(t,s) &=   e^{(t-s-D)A_1} B_1K_1(D) \chi_{(D,t)}(s) 
 \\&\quad +  \int_s^t e^{(t-\tau-D)A_1} B_1K_1(D) \Phi_D(\tau,s)\, d\tau \\
&=  e^{(t-s-D)A_1} B_1K_1(D) \chi_{(D,t)}(s) \\
 &\quad + \int_0^{t-s} e^{(t-s-\tau-D)A_1} B_1K_1(D) \Phi_D(\tau+s,s)\, d\tau
\end{split}
\end{equation*}
and then setting $r=t-s$ (note that $0<r<2D$) we search $\Phi_D(t,s)=f(r)$ with
\begin{equation*}
\begin{split}
f(r) &=  e^{(r-D)A_1} B_1K_1(D)  + \int_0^{r} e^{(r-\tau-D)A_1} B_1K_1(D) f(\tau)\, d\tau\\
&= f_0(r) + (\tilde T_D f)(r)
\end{split}
\end{equation*}
with $f_0(r) = e^{(r-D)A_1} B_1K_1(D)$ and
$(\tilde T_D f)(r) = \int_0^{r} e^{(r-\tau-D)A_1} B_1K_1(D) f(\tau)\, d\tau$.
Formally, we get $f(r)=0$ whenever $r<0$ and $r>2D$, and
\begin{equation*}
\begin{split}
f(r) &= \sum_{j=0}^{+\infty} (\tilde T_D^j f_0)(r) \\
& = e^{(r-D)A_1} B_1K_1(D)  \\
&\qquad + \int_0^{r} e^{(r-\tau-D)A_1} B_1K_1(D) e^{(\tau-D)A_1} B_1K_1(D)   \, d\tau \\
&\qquad + 
\int_0^{r} e^{(r-\tau-D)A_1} B_1K_1(D) \\
&\qquad \times \int_0^{\tau} e^{(\tau-s-D)A_1} B_1K_1(D) e^{(s-D)A_1} B_1K_1(D)   \, ds   \, d\tau \\
&\qquad + \cdots
\end{split}
\end{equation*}
for every $r\in(0,2D)$.
The convergence of the series follows from the estimate
$$
\vert (\tilde T_D^j f_0)(r)\vert \leq \Vert B_1\Vert_{\R^{n+1}}^{j+1}\Vert K_1(D)\Vert_{\R^{n+1}}^{j+1} \frac{r^j}{j!} e^{(r-jD)\Vert A_1\Vert},
$$
which is immediate to establish by induction.

\medskip

If $t>2D$ then $\max(t-D,D)=t-D$ and \eqref{volterra} yields
\begin{equation*}
\begin{array}{rcl}\Phi_D(t,s) &= & e^{(t-s-D)A_1} B_1K_1(D) \chi_{(t-D,t)}(s) 
 \\&&+ \int_{\max(s,t-D)}^t e^{(t-\tau-D)A_1} B_1K_1(D) \Phi_D(\tau,s)\, d\tau 
\end{array}\end{equation*}
There are two subcases.

If $s<t-D$ or if $s>t$ then clearly $\Phi_D(t,s)=0$ is a solution.

If $t-D<s<t$ then
\begin{equation*}
\begin{array}{rcl}\Phi_D(t,s) &\!\!\!=&\!\!\!  e^{(t-s-D)A_1} B_1K_1(D) \chi_{(t-D,t)}(s) 
 \\&&\!\!\!+ \int_{s}^t e^{(t-\tau-D)A_1} B_1K_1(D) \Phi_D(\tau,s)\, d\tau \\
&\!\!\!=&\!\!\!  e^{(t-s-D)A_1} B_1K_1(D) \chi_{(t-D,t)}(s) 
 \\&&\!\!\!+ \int_0^{t-s} e^{(t-s-\tau-D)A_1} B_1K_1(D) \Phi_D(\tau+s,s)\, d\tau ,
\end{array}
\end{equation*}
and then setting $r=t-s$ (note that, then, $0<r<D$), similarly as above, we search $\Phi_D(t,s)=f(r)$ with
$f(r) = f_0(r) + (\tilde T_D f)(r)$ for every $r\in(0,D)$.
Formally, we get $f(r)=0$ whenever $r<0$ and $r>D$, and
$f(r) = \sum_{j=0}^{+\infty} (\bar T_D^j f_0)(r)$
for every $r\in(0,D)$. The convergence is established as previously.

\section{Conclusion}\label{sec:con}
For a reaction-diffusion equation with delay boundary control, a new constructive design method has been suggested. It is based on an explicit form of the classical Artstein transformation for the finite-dimensional unstable part of the delay system. By an appropriate Lyapunov function, it is shown that the designed boundary delay control stabilizes the entire reaction-diffusion partial differential equation. 
A numerical simulation shows how effective is this approach. 

This work lets some questions open. First, by noting that the studied system is equivalent to a scalar parabolic equation coupled with a scalar transport equation, it is natural to see if it is possible to adapt this design method to a system composed of several parabolic PDEs coupled with a hyperbolic system, coupled at the boundary (or inside by internal terms) and controlled by means of a delay controller.  Finally a degenerate reaction-diffusion system system has been studied in \cite{CrepeauPrieur08} for an approximate controllability problem. It is thus natural to investigate the stabilization problem of this PDE by means of a boundary delay control.

\bibliographystyle{plain}

\bibliography{cp}

\begin{thebibliography}{10}

\bibitem{artstein1982linear}
Z.~Artstein.
\newblock Linear systems with delayed controls: a reduction.
\newblock {\em IEEE Transactions on Automatic Control}, 27(4):869--879, 1982.

\bibitem{Bresch2014:IEEETAC}
D.~Bresch-Pietri and M.~Krstic.
\newblock Delay-adaptive control for nonlinear systems.
\newblock {\em IEEE Transactions on Automatic Control}, 59(5):1203--1218, 2014.

\bibitem{bresch2014output}
D.~Bresch-Pietri and M.~Krstic.
\newblock Output-feedback adaptive control of a wave {PDE} with boundary
  anti-damping.
\newblock {\em Automatica}, 50(5):1407--1415, 2014.

\bibitem{breschprieurtrelat}
D.~Bresch-Pietri, C.~Prieur, and E.~Tr\'elat.
\newblock New formulation of predictors for finite-dimensional linear control
  systems with input delay.
\newblock {\em Preprint hal-01227332},
  https://hal.archives-ouvertes.fr/hal-01227332, 2017.

\bibitem{coron-trelat-heat}
J.-M. Coron and E.~Tr\'elat.
\newblock Global steady-state controllability of one-dimensional semilinear
  heat equations.
\newblock {\em SIAM Journal on Control and Optimization}, 43(2):549--569, 2004.

\bibitem{coron-trelat-wave}
J.-M. Coron and E.~Tr{\'e}lat.
\newblock Global steady-state stabilization and controllability of 1{D}
  semilinear wave equations.
\newblock {\em Commun. Contemp. Math.}, 8(4):535--567, 2006.

\bibitem{CrepeauPrieur08}
E.~Cr\'epeau and C.~Prieur.
\newblock Approximate controllability of a reaction-diffusion system.
\newblock {\em Systems Control Lett.}, 57(12):1048--1057, 2008.

\bibitem{fridman:SIAM:2010}
E.~Fridman, S.~Nicaise, and J.~Valein.
\newblock Stabilization of second order evolution equations with unbounded
  feedback with time-dependent delay.
\newblock {\em SIAM Journal on Control and Optimization}, 48(8):5028--5052,
  2010.

\bibitem{FridmanOrlov2009}
E.~Fridman and Y.~Orlov.
\newblock Exponential stability of linear distributed parameter systems with
  time-varying delays.
\newblock {\em Automatica}, 45(1):194--201, 2009.

\bibitem{Khalil:2002}
H.K. Khalil.
\newblock {\em Nonlinear Systems}.
\newblock Prentice-Hall, 3rd edition, 2002.

\bibitem{krstic2008compensating}
M.~Krstic.
\newblock On compensating long actuator delays in nonlinear control.
\newblock {\em IEEE Transactions on Automatic Control}, 53(7):1684--1688, 2008.

\bibitem{Krstic:SCL:2009}
M.~Krstic.
\newblock Control of an unstable reaction-diffusion {PDE} with long input
  delay.
\newblock {\em Systems \& Control Letters}, 58(10-11):773--782, 2009.

\bibitem{manitius1979finite}
A.~Manitius and A.W. Olbrot.
\newblock Finite spectrum assignment problem for systems with delays.
\newblock {\em IEEE Transactions on Automatic Control}, 24(4):541--552, 1979.

\bibitem{nicaise2008stabilization}
S.~Nicaise and C.~Pignotti.
\newblock Stabilization of the wave equation with boundary or internal
  distributed delay.
\newblock {\em Differential and Integral Equations}, 21(9-10):935--958, 2008.

\bibitem{Nicaise2007425}
S.~Nicaise and J.~Valein.
\newblock Stabilization of the wave equation on 1-d networks with a delay term
  in the nodal feedbacks.
\newblock {\em Networks and Heterogeneous Media}, 2(3):425--479, 2007.

\bibitem{nicaise:dcds:2009}
S.~Nicaise, J.~Valein, and E.~Fridman.
\newblock Stability of the heat and wave equations with boundary time-varying
  delays.
\newblock {\em Discrete and Continuous Dynamical Systems}, 2:559--581, 2009.

\bibitem{richard:survey}
J.-P. Richard.
\newblock Time-delay systems: an overview of some recent advances and open
  problems.
\newblock {\em Automatica}, 39(10):1667--1694, 2003.

\bibitem{russell1978controllability}
D.~L. Russell.
\newblock Controllability and stabilizability theory for linear partial
  differential equations: recent progress and open questions.
\newblock {\em SIAM Review}, 20(4):639--739, 1978.

\bibitem{solomon2015stability}
O.~Solomon and E.~Fridman.
\newblock Stability and passivity analysis of semilinear diffusion {PDEs} with
  time-delays.
\newblock {\em International Journal of Control}, 88(1):180--192, 2015.

\bibitem{Sontag:book}
E.~Sontag.
\newblock {\em Mathematical Control Theory: Deterministic Finite Dimensional
  Systems}.
\newblock Springer, New York, second edition, 1998.

\bibitem{Trelat2005}
E~Tr{\'e}lat.
\newblock {\em Contr{\^o}le optimal (French) [Optimal control], Th{\'e}orie \&
  applications [Theory and applications]}.
\newblock Math. Concretes [Concrete Mathematics], Vuibert, Paris, 2005.

\end{thebibliography}

\end{document}